\title{Inner-iteration preconditioning\\with a symmetric splitting matrix\\for rank-deficient least squares problems}
\author{Keiichi Morikuni\footnote{\texttt{morikuni@cs.cas.cz}} \footnote{Institute of Computer Science, The Czech Academy of Sciences. Current affiliation: Faculty of Engineering, Information and Systems, University of Tsukuba (\texttt{morikuni@cs.tsukuba.ac.jp}).}}
\date{\today}
\theoremstyle{plain} 
\newtheorem{theorem}{Theorem}[section]
\newtheorem{lemma}[theorem]{Lemma}
\newtheorem{remark}[theorem]{Remark}
\numberwithin{equation}{section}
\newfont{\bg}{cmr9 scaled\magstep4}
\newcommand{\bigzerol}{\smash{\lower1.0ex\hbox{\bg 0}}}
\DeclareMathOperator{\diag}{diag}
\DeclareMathOperator{\I}{I}
\begin{document}
\maketitle

\begin{abstract}
Stationary iterative methods with a symmetric splitting matrix are performed as inner-iteration preconditioning for Krylov subspace methods.
We give conditions such that the inner-iteration preconditioning matrix is definite, and show that conjugate gradient (CG) method preconditioned by the inner iterations determines a solution of symmetric and positive semidefinite linear systems, and the minimal residual (MINRES) method preconditioned by the inner iterations determines a solution of symmetric linear systems including the singular case.
These results are applied to the CG and MINRES-type methods such as the CGLS, LSMR, and CGNE methods preconditioned by inner iterations, and thus justify using these methods for solving least squares and minimum-norm solution problems whose coefficient matrices are not necessarily of full rank.
Thus, we complement the convergence theories of these methods presented in [K.\ Morikuni and K.\ Hayami, SIAM J.\ Matrix Appl.\ Anal., 34 (2013), pp.~1--22], [K.\ Morikuni and K.\ Hayami, SIAM J.\ Matrix Appl.\ Anal., 36 (2015), pp.~225--250], and give bounds for these methods.

\smallskip
\noindent \textbf{Keywords:} Rank-deficient least squares problems, Preconditioning, Krylov subspace methods, Symmetric singular linear systems.

\noindent \textbf{AMS subject classifications:} 65F08, 65F10, 65F20, 65F50.
\end{abstract}

\section{Introduction.}
First, consider solving symmetric linear systems of equations
\begin{align}
	\mathbf{A} \mathbf{x} = \mathbf{b},
	\label{eq:LS}
\end{align}
where $\mathbf{A} = \mathbf{A}^\mathsf{T} \in \mathbb{R}^{n \times n}$ may be singular and $\mathbf{b}$ is in the range space of $\mathbf{A}$, $\mathcal{R}(\mathbf{A})$.

In the symmetric and positive definite (SPD) case, i.e., $\mathbf{v}^\mathsf{T} \! \mathbf{A} \mathbf{v} > 0$ for all $\mathbf{v} \not = \mathbf{0}$, the conjugate gradient (CG) method \cite{HestenesStiefel1952} has been used.
In the symmetric and positive semidefinite (SPSD) case, i.e., $\mathbf{v}^\mathsf{T} \! \mathbf{A} \mathbf{v} \geq 0$ for all $\mathbf{v} \in \mathbb{R}^n$ and for all $\mathbf{b} \in \mathcal{R}(\mathbf{A})$, CG with the initial iterate $\mathbf{x}_0 \in \mathbb{R}^n$ determines the $k$th iterate $\mathbf{x}_k \in \mathbf{x}_0 + \mathcal{K}_k (\mathbf{A}, \mathbf{r}_0)$ that minimizes the $\mathbf{A}$-seminorm $\| \mathbf{e}_k \|_{\mathbf{A}} = \| \mathbf{x}_k - \mathbf{x}_* \|_{\mathbf{A}} $, where $\mathbf{r}_0 = \mathbf{b} - \mathbf{A} \mathbf{x}_0$, $\mathcal{K}_k (\mathbf{A}, \mathbf{r}_0) = \mathrm{span} \lbrace \mathbf{r}_0, \mathbf{A} \mathbf{r}_0, \dots, \mathbf{A}^{k-1} \mathbf{r}_0 \rbrace$ is the Krylov subspace of order $k$, $\| \mathbf{e} \|_{\mathbf{A}} = \sqrt{\mathbf{e}^\mathsf{T} \mathbf{A} \mathbf{e} }$ is the seminorm associated with $\mathbf{A}$ SPSD,
\begin{align}
	\mathbf{x}_* = \mathbf{A}^\dag \mathbf{b} + (\mathbf{I} - \mathbf{A}^\dag \! \mathbf{A}) \mathbf{x}_0,
	\label{eq:Ksol}
\end{align}
and $\mathbf{I}$ is the identity matrix (see \cite[Theorem~3.2]{WeiWu2000}).
Here, $\mathbf{A}^\dag$ is the pseudoinverse of $\mathbf{A}$.
CG determines the solution $\mathbf{x}_*$ of \eqref{eq:LS} for all $\mathbf{b} \in \mathcal{R}(\mathbf{A})$ and for all $\mathbf{x}_0 \in \mathbb{R}^n$, and determines the minimum-norm solution $\mathbf{A}^\dag \mathbf{b}$ of \eqref{eq:LS} for all $\mathbf{b} \in \mathcal{R}(\mathbf{A})$ and for all $\mathbf{x}_0 \in \mathcal{R}(\mathbf{A})$ \cite{KammererNashed1972SINUM, HayamiYin2008}.
An error bound of CG is given by $\| \mathbf{e}_k \|_{\mathbf{A}} \leq 2 [ (\sqrt{\kappa_2 (\mathbf{A})} - 1) / (\sqrt{\kappa_2 (\mathbf{A})} + 1)]^k \| \mathbf{e}_0 \|_{\mathbf{A}}$ \cite{Kaniel1966}, where $\kappa_2 (\mathbf{A}) = \| \mathbf{A} \|_2 \| \mathbf{A}^{\dag} \|_2$.
Hence, the convergence is expected to be fast as $\kappa_2 (\mathbf{A})$ is small.

In the indefinite case, the minimal residual (MINRES) method \cite{PaigeSaunders1975} has been used.
For $\mathbf{b} \in \mathbb{R}^n$, MINRES with $\mathbf{x}_0 \in \mathbb{R}^n$ determines the $k$th iterate $\mathbf{x}_k \in \mathbf{x}_0 + \mathcal{K}_k (\mathbf{A}, \mathbf{r}_0)$ that minimizes $\| \mathbf{r}_k \|_2$.
MINRES determines a solution of least squares problems $\min_{\mathbf{x} \in \mathbb{R}^n} \| \mathbf{b} - \mathbf{A} \mathbf{x} \|_2$ for all $\mathbf{b} \in \mathbb{R}^n$ and for all $\mathbf{x}_0 \in \mathbb{R}^n$, determines the solution of the form \eqref{eq:Ksol} for all $\mathbf{b} \in \mathcal{R}(\mathbf{A})$ and for all $\mathbf{x}_0 \in \mathbb{R}^n$, and determines the minimum-norm solution of \eqref{eq:LS} for all $\mathbf{b} \in \mathcal{R}(\mathbf{A})$ and for all $\mathbf{x}_0 \in \mathcal{R}(\mathbf{A})$.
These arguments are given by specializing the convergence analysis of the generalized minimal residual (GMRES) method \cite{BrownWalker1997, HayamiSugihara2011} to the symmetric case.
Similar to CG, a residual bound of MINRES is given by $\| \mathbf{r}_k \|_2 \leq \varepsilon^k \| \mathbf{r}_0 \|_2$ with $\varepsilon^k = \min_{\genfrac{}{}{0pt}{2}{p \in \mathbb{P}_k}{p(0) = 1}} \max_{\lambda \in \sigma (\mathbf{A})} | p(\lambda) |$, where $\mathbb{P}_k$ is the set of all polynomials of degree not exceeding $k$ and $\sigma (\mathbf{A})$ is the spectrum of $\mathbf{A}$ \cite[Theorem~1]{Bai2000}.
See \cite{FoxHuskeyWilkinson1948, HestenesStiefel1952, Stiefel1955, PaigeSaunders1975, Vinsome1976, YoungJea1980, EisenstatElmanSchultz1983} for other Krylov subspace methods for symmetric linear systems.

For accelerating the convergence of CG and MINRES, consider using preconditioning.
See \cite{Kaasschieter1988} for the preconditioned CG method in the singular case.
Several steps of stationary iterative methods serve as preconditioning for Krylov subspace methods, which may be considered as inner iterations \cite{MorikuniHayami2013}. 
We consider using stationary iterative methods with a symmetric splitting matrix as inner-iteration preconditioning for CG and MINRES.
To show that these methods determine a solution of symmetric and indefinite linear systems including the singular case, we give conditions such that the inner-iteration preconditioning matrix is SPD and give convergene bounds for these methods.
The conditions are satisfied by the Richardson, Jacobi overrelaxation (JOR), and symmetric successive overrelaxation (SSOR) methods \cite{Richardson1911, Jacobi1845, Sheldon1955}.
Thus, we extend  the theories in the SPD case \cite{DuboisGreenbaumRodrigue1979, Adams1985} to a general symmetric case.
Also, inner-iteration preconditioning is regarded as an extension of the splitting preconditioning \cite[Section 10.2]{Saad2003}.
An extension to symmetric positive semidefinite systems was considered in \cite{SugiharaHayami2016TJSIAM}.

These methods can be used to determine a solution of the normal equations
\begin{align}
	A^\mathsf{T} \! A \boldsymbol{x} = A^\mathsf{T} \boldsymbol{b},
	\label{eq:normeq}
\end{align}
equivalently least squares problems
\begin{align}
	\min_{\boldsymbol{x} \in \mathbb{R}^{n}} \| \boldsymbol{b} - A \boldsymbol{x} \|_2,
	\label{eq:LSprob}
\end{align}
where $A \in \mathbb{R}^{m \times n}$ is not necessarily of full rank and $\boldsymbol{b} \in \mathbb{R}^m$ is not necessarily in $\mathcal{R}(A)$.
For solving \eqref{eq:normeq}, we can use efficient implementations of CG and MINRES such as the CGLS, LSQR, and LSMR methods \cite{HestenesStiefel1952, PaigeSaunders1982a, FongSaunders2011}.
For solving \eqref{eq:LSprob}, the (preconditioned) CGLS and LSQR methods have been used, which both are mathematically equivalent to (preconditioned) CG applied to \eqref{eq:normeq}.
Another option for solving \eqref{eq:LSprob} is to use the (preconditioned) LSMR method \cite{FongSaunders2011}, which is mathematically equivalent to (preconditioned) MINRES applied to \eqref{eq:normeq}.

On the other hand, consider solving minimum-norm solution problems 
\begin{align}
	 \min \| \boldsymbol{x} \|_2, \quad \mbox{subject to} \quad A \boldsymbol{x} = \boldsymbol{b}, \quad \boldsymbol{b} \in \mathcal{R}(A).
	 \label{eq:minsolLS}
\end{align}
The solution of \eqref{eq:minsolLS} is the pseudo-inverse solution of $A \boldsymbol{x} = \boldsymbol{b}$, $\boldsymbol{b} \in \mathcal{R}(A)$.
The problem \eqref{eq:minsolLS} is equivalent to the normal equations of the second kind 
\begin{align}
	\boldsymbol{x} = A^\mathsf{T} \boldsymbol{u}, \quad \mbox{subject to} \quad A A^\mathsf{T} \boldsymbol{u} = \boldsymbol{b}, \quad \boldsymbol{b} \in \mathcal{R}(A).
	\label{eq:normaleq2}
\end{align}
Note that the constraint of \eqref{eq:normaleq2} is an SPSD linear system.

For solving \eqref{eq:minsolLS}, the (preconditioned) CGNE method \cite{Craig1955} has been used, which is mathematically equivalent to (preconditioned) CG applied to the constraint of \eqref{eq:normaleq2}.
Another option for solving \eqref{eq:minsolLS} is to use the (preconditioned) MRNE method \cite{MorikuniHayami2015}, which is mathematically equivalent to (preconditioned) MINRES applied to the constraint of \eqref{eq:normaleq2}.
We apply the above mentioned result for symmetric linear systems to CGLS, LSQR, LSMR, CGNE, and MRNE preconditioned by inner iterations, and thus justify using these methods particularly for rank-deficient least squares problems and minimum-norm solution problems.

In this paper, we complement the theory for the inner-iteration preconditioning for the CG and MINRES-type methods including the rank-deficient case.
These methods have an advantage concerning memory requirement compared to the right- and left-preconditioned GMRES methods for least squares problems \cite{HayamiYinIto2010, MorikuniHayami2013, MorikuniHayami2015}.
CGLS and CGNE preconditioned by one step of SSOR-type methods were proposed in \cite{BjorckElfving1979}.
These methods were generalized to multistep versions in \cite{MorikuniHayami2013, MorikuniHayami2015}.

The rest of the paper is organized as follows.
In Section~\ref{sec:symmLS}, we give conditions such that CG and MINRES preconditioned by inner iterations determine a solution of linear systems, give bounds of these methods, and derive conditions for specific stationary iterative methods that satisfy the conditions.
In Sections~\ref{sec:aplLS} and \ref{sec:aplminsol}, we apply these results to CGLS, LSQR, and LSMR preconditioned by inner iterations for solving least squares problems and CGNE and MRNE preconditioned by inner iterations for solving minimum-norm solution problems, respectively.
In Section~\ref{sec:conc}, we conclude the paper.

\section{Preconditioning for symmetric linear systems.} \label{sec:symmLS}
Consider solving symmetric linear systems \eqref{eq:LS}.
Let $\mathbf{P} = \mathbf{P}^\mathsf{T} \in \mathbb{R}^{n \times n}$ be a preconditioning matrix for \eqref{eq:LS}.
If $\mathbf{P}$ is SPD, then the linear system \eqref{eq:LS} is equivalent to the preconditioned one $\mathbf{P}^{-1} \mathbf{A} \mathbf{x} = \mathbf{P}^{-1} \mathbf{b}$, or
\begin{align}
	\mathbf{P}^{-\frac{1}{2}} \mathbf{A} \mathbf{P}^{-\frac{1}{2}} \mathbf{y} = \mathbf{P}^{-\frac{1}{2}} \mathbf{b}, \quad \mathbf{x} = \mathbf{P}^{-\frac{1}{2}} \mathbf{y}
	\label{eq:PLS}
\end{align}
for all $\mathbf{b} \in \mathcal{R}(\mathbf{A})$, where $\mathbf{P}^{\frac{1}{2}}$ is the square root of $\mathbf{P}$. 
If $\hat{\mathbf{A}} = \mathbf{P}^{-\frac{1}{2}} \mathbf{A} \mathbf{P}^{-\frac{1}{2}}$, $\hat{\mathbf{x}} = \mathbf{P}^{\frac{1}{2}} \mathbf{x}$, and $\hat{\mathbf{b}} = \mathbf{P}^{-\frac{1}{2}} \mathbf{b}$, \eqref{eq:PLS} becomes $\hat{\mathbf{A}} \hat{\mathbf{x}} = \hat{\mathbf{b}}$.

For $\mathbf{b} \in \mathcal{R}(\mathbf{A})$, CG applied to \eqref{eq:PLS} (PCG) determines $\mathbf{x}_k \in \mathbf{x}_0 + \mathcal{K}_k (\mathbf{P}^{-1} \mathbf{A}, \mathbf{P}^{-1} \mathbf{r}_0)$ that minimizes $\| \hat{\mathbf{x}}_k - \hat{\mathbf{x}}_* \|_{\mathbf{A}}$, equivalently CG applied to $\mathbf{P}^{-1} \mathbf{A} \mathbf{x} = \mathbf{P}^{-1} \mathbf{b}$ with the $\mathbf{P}$-inner product does this, where
\begin{align}
	\hat{\mathbf{x}}_* = \mathbf{P}^{-\frac{1}{2}} \hat{\mathbf{A}}^\dag \hat{\mathbf{b}} + \mathbf{P}^{-\frac{1}{2}} (\mathbf{I} - \hat{\mathbf{A}}^\dag \hat{\mathbf{A}}) \hat{\mathbf{x}}_0
	\label{eq:PKsol}
\end{align}
(cf.~\eqref{eq:Ksol}).
For $\mathbf{b} \in \mathcal{R}(\mathbf{A})$, MINRES applied to \eqref{eq:PLS} (PMR) determines $\mathbf{x}_k \in \break \mathbf{x}_0 + \mathcal{K}_k (\mathbf{P}^{-1} \mathbf{A}, \mathbf{P}^{-1} \mathbf{r}_0)$ that minimizes $\| \hat{\mathbf{r}}_k \|_2$, equivalently MINRES applied to $\mathbf{P}^{-1} \mathbf{A} \mathbf{x} = \mathbf{P}^{-1} \mathbf{b}$ with the $\mathbf{P}$-inner product does this, where $\hat{\mathbf{r}}_k = \hat{\mathbf{b}} - \hat{\mathbf{A}} \hat{\mathbf{x}}_k$.
On the other hand, if $\mathbf{P}$ is symmetric and negative definite (SND), i.e., $\mathbf{v}^\mathsf{T} \! \mathbf{A} \mathbf{v} < 0$ for all $\mathbf{v} \not = \mathbf{0}$, then the linear system $\mathbf{A} \mathbf{x} = \mathbf{b}$ is equivalent to the preconditioned one $(-\mathbf{P})^{-1} \mathbf{A} \mathbf{x} = (-\mathbf{P})^{-1} \mathbf{b}$, or $(-\mathbf{P})^{-\frac{1}{2}} \mathbf{A} (-\mathbf{P})^{-\frac{1}{2}} \mathbf{y} = (-\mathbf{P})^{-\frac{1}{2}} \mathbf{b}$, $\mathbf{x} = (-\mathbf{P})^{-\frac{1}{2}} \mathbf{y}$ for all $\mathbf{b} \in \mathcal{R}(\mathbf{A})$.
Without loss of generality, we restrict ourselves to the case where the preconditioning matrix is SPD for simplicity hereafter.
Even when the preconditioning matrix is SND, the arguments below hold by changing the sign.
Thus, we obtain the following.

\begin{lemma}{(\cite{Kaasschieter1988})} \label{lm:PCG}
Assume that $\mathbf{A}$ is SPSD and $\mathbf{P}$ is SPD.
Then, PCG determines a solution of $\mathbf{A} \mathbf{x} = \mathbf{b}$ for all $\mathbf{b} \in \mathcal{R}(\mathbf{A})$ and for all $\mathbf{x}_0 \in \mathbb{R}^n$.
The solution is of the form \eqref{eq:PKsol}.
\end{lemma}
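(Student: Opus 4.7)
The plan is to reduce the preconditioned problem to the unpreconditioned SPSD case already treated in the introduction, and then pull everything back through the change of variables. Since $\mathbf{P}$ is SPD, the symmetric square root $\mathbf{P}^{\frac{1}{2}}$ exists together with its symmetric inverse $\mathbf{P}^{-\frac{1}{2}}$. As recalled in the preamble to the lemma, PCG is, iterate by iterate, nothing but CG applied to the symmetric system
\[
	\hat{\mathbf{A}} \hat{\mathbf{x}} = \hat{\mathbf{b}}, \quad \hat{\mathbf{A}} = \mathbf{P}^{-\frac{1}{2}} \mathbf{A} \mathbf{P}^{-\frac{1}{2}}, \quad \hat{\mathbf{x}} = \mathbf{P}^{\frac{1}{2}} \mathbf{x}, \quad \hat{\mathbf{b}} = \mathbf{P}^{-\frac{1}{2}} \mathbf{b},
\]
started from $\hat{\mathbf{x}}_0 = \mathbf{P}^{\frac{1}{2}} \mathbf{x}_0$, so it suffices to verify that the hypotheses of the quoted unpreconditioned SPSD CG convergence result hold for this transformed system.

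First I would check that $\hat{\mathbf{A}}$ is SPSD: symmetry is immediate, and $\mathbf{v}^\mathsf{T} \hat{\mathbf{A}} \mathbf{v} = (\mathbf{P}^{-\frac{1}{2}} \mathbf{v})^\mathsf{T} \mathbf{A} (\mathbf{P}^{-\frac{1}{2}} \mathbf{v}) \geq 0$ since $\mathbf{A}$ is SPSD. Next I would check that $\hat{\mathbf{b}} \in \mathcal{R}(\hat{\mathbf{A}})$: if $\mathbf{b} = \mathbf{A} \mathbf{z}$ with $\mathbf{z} \in \mathbb{R}^n$, then $\hat{\mathbf{b}} = \mathbf{P}^{-\frac{1}{2}} \mathbf{A} \mathbf{P}^{-\frac{1}{2}} (\mathbf{P}^{\frac{1}{2}} \mathbf{z}) = \hat{\mathbf{A}} (\mathbf{P}^{\frac{1}{2}} \mathbf{z})$. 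Applying the SPSD CG convergence statement from the introduction to $\hat{\mathbf{A}} \hat{\mathbf{x}} = \hat{\mathbf{b}}$, CG terminates at
\[
	\hat{\mathbf{x}}_* = \hat{\mathbf{A}}^\# \hat{\mathbf{b}} + (\mathbf{I} - \hat{\mathbf{A}}^\# \hat{\mathbf{A}}) \hat{\mathbf{x}}_0
\]
for every $\hat{\mathbf{x}}_0 \in \mathbb{R}^n$. Using $\hat{\mathbf{A}}^\# \hat{\mathbf{A}} = \hat{\mathbf{A}} \hat{\mathbf{A}}^\#$ (a defining property of the group inverse recalled at the start of the paper) and pulling back via $\mathbf{x}_* = \mathbf{P}^{-\frac{1}{2}} \hat{\mathbf{x}}_*$, $\hat{\mathbf{x}}_0 = \mathbf{P}^{\frac{1}{2}} \mathbf{x}_0$ produces exactly the formula \eqref{eq:PKsol}.

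Finally I would confirm that $\mathbf{x}_*$ indeed solves $\mathbf{A} \mathbf{x} = \mathbf{b}$: since $\hat{\mathbf{b}} \in \mathcal{R}(\hat{\mathbf{A}})$, we have $\hat{\mathbf{A}} \hat{\mathbf{x}}_* = \hat{\mathbf{b}}$; multiplying by $\mathbf{P}^{\frac{1}{2}}$ gives $\mathbf{A} \mathbf{P}^{-\frac{1}{2}} \hat{\mathbf{x}}_* = \mathbf{b}$, i.e., $\mathbf{A} \mathbf{x}_* = \mathbf{b}$. I do not anticipate any serious obstacle: the proof is essentially a bookkeeping exercise in the symmetric change of variables $\mathbf{x} \leftrightarrow \mathbf{P}^{\frac{1}{2}} \mathbf{x}$. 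The only point that deserves some care is to use the SPSD (rather than SPD) branch of the quoted CG result, since $\hat{\mathbf{A}}$ inherits the possible singularity of $\mathbf{A}$; this is why $\hat{\mathbf{A}}^\#$ appears in \eqref{eq:PKsol} instead of an ordinary inverse, and why $\mathcal{R}(\mathbf{A})$ (not all of $\mathbb{R}^n$) is the natural admissible set of right-hand sides.
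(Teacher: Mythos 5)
Your proposal is correct and follows essentially the same route as the paper: the paper states Lemma \ref{lm:PCG} as an immediate consequence of the change of variables \eqref{eq:PLS} together with the unpreconditioned SPSD CG result quoted in the introduction, and your argument simply makes explicit the details the paper leaves implicit (that $\hat{\mathbf{A}}$ is SPSD, that $\hat{\mathbf{b}} \in \mathcal{R}(\hat{\mathbf{A}})$, and the pull-back to \eqref{eq:PKsol} via the commutativity of the group inverse).
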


\begin{lemma}{(\cite{PaigeSaunders1975,BrownWalker1997,HayamiSugihara2011})} \label{lm:PMR}
Assume thet $\mathbf{A} = \mathbf{A}^\mathsf{T}$ and $\mathbf{P}$ is SPD.
Then, PMR determines a solution of $\mathbf{A} \mathbf{x} = \mathbf{b}$ for all $\mathbf{b} \in \mathcal{R}(\mathbf{A})$ and for all $\mathbf{x}_0 \in \mathbb{R}^n$.
The solution is of the form \eqref{eq:PKsol}.
\end{lemma}

We note that PCG and PMR do not necessarily determine the minimum-norm solution $\mathbf{A}^\dag \mathbf{b}$.
Under the assumptions in Lemmas~\ref{lm:PCG} and \ref{lm:PMR}, PCG and PMR respectively determine the weighted minimum-norm solution $\arg\min \| \mathbf{P}^{\frac{1}{2}} \boldsymbol{x} \|_2$, subject to $\mathbf{A} \mathbf{x} = \mathbf{b}$ for all $\mathbf{b} \in \mathcal{R}(\mathbf{A})$ and for all $\mathbf{x}_0 \in \mathcal{R}(\mathbf{P}^{-1} \mathbf{A})$.

\subsection{Inner-iterations preconditioned methods.} \label{sec:iCG}
Consider using $\ell$ steps of a stationary iterative method as inner-iteration preconditioning for CG for SPSD linear systems.
Let $\mathbf{C}^{(\ell)}$ be the preconditioning matrix of $\ell$ inner iterations.
An algorithm of this method is given as follows \cite{Axelsson1976} (see \cite{EisenstatOrtegaVaughan1990} for efficient implementations).

\begin{algorithm}
	\caption{CG method preconditioned by $\ell$ inner iterations.}
		\label{alg:CGin}
		\begin{algorithmic}[1]
		\STATE Let $\mathbf{x}_0 \in \mathbb{R}^n$ be the initial iterate and $\mathbf{r}_0 = \mathbf{b} - \mathbf{A} \mathbf{x}_0$.
		\STATE Apply $\ell$ steps of a stationary iterative method to $\mathbf{A} \mathbf{z} =\mathbf{r}_0$ to obtain $\mathbf{z}_0 = \mathbf{p}_0 = \mathbf{C}^{(\ell)} \mathbf{r}_0$. 
		\FOR{$k = 1, 2, \dots$ until convergence}
		\STATE $\alpha_k = {\mathbf{r}_k}^\mathsf{T} \mathbf{z}_k / {\mathbf{p}_k}^\mathsf{T} \mathbf{A} \mathbf{p}_k$, $\mathbf{x}_{k+1} = \mathbf{x}_k + \alpha_k \mathbf{p}_k$, $\mathbf{r}_{k+1} = \mathbf{r}_k - \alpha_k \mathbf{A} \mathbf{p}_k$
		\STATE Apply $\ell$ steps of a stationary iterative method to $\mathbf{A} \mathbf{z} =\mathbf{r}_{k+1}$ to obtain $\mathbf{z}_{k+1} \! = \! \mathbf{C}^{(\ell)}$.
		\STATE $\beta_k  =  {\mathbf{r}_{k+1}}^\mathsf{T} \mathbf{z}_{k+1} / {\mathbf{r}_k}^\mathsf{T} \mathbf{z}_k$, $\mathbf{p}_{k+1} = \mathbf{z}_{k+1} + \beta_k \mathbf{p}_k$
		\ENDFOR
	\end{algorithmic}
\end{algorithm}

We form the preconditioned matrix for CG with $\ell$ inner iterations. 
Consider the stationary iterative method applied to $\mathbf{A} \mathbf{z} = \mathbf{r}_k$ in lines 2 and 5.
We call $\mathbf{A} = \mathbf{M} - \mathbf{N}$ a splitting of $\mathbf{A}$ and assume that $\mathbf{M}$ is nonsingular.
Denote the iteration matrix by $\mathbf{H} = \mathbf{M}^{-1} \mathbf{N}$.
Assume that the initial iterate $\mathbf{z}^{(0)}$ is in the nullspace of $\mathbf{H}$, e.g., $\mathbf{z}^{(0)} = \mathbf{0}$.
Then, the $\ell$th iterate of the stationary iterative method is 
$\mathbf{z}^{(\ell)} = \mathbf{H} \mathbf{z}^{(\ell - 1)} + \mathbf{M}^{-1} \mathbf{r}_k = \sum_{i = 0}^{\ell - 1} \mathbf{H}^i \mathbf{M}^{-1} \mathbf{r}_k, \ell \in \mathbb{N}$.
Hence, the inner-iteration preconditioning matrix is $\mathbf{C}^{(\ell)} = \sum_{i = 0}^{\ell - 1} \mathbf{H}^i \mathbf{M}^{-1}$.
Therefore, the preconditioned matrix is $\mathbf{C}^{(\ell)} \mathbf{A} = \sum_{i = 0}^{\ell - 1} \mathbf{H}^i (\mathbf{I} - \mathbf{H}) = \mathbf{I} - \mathbf{H}^\ell$.
Conditions such that $\mathbf{C}^{(\ell)}$ is SPD will be given in Section~\ref{sec:definit}.
Under the conditions, one can set $\mathbf{P}^{-1} = \mathbf{C}^{(\ell)}$ and the assumptions in Lemma~\ref{lm:PCG} are satisfied.
The inner-iteration preconditioning can be considered as the polynomial preconditioning using the truncated Neuman series expansion of $\mathbf{M}^{-1} \! \mathbf{A}$ (see \cite{Cesari1937, DuboisGreenbaumRodrigue1979, EisenstatOrtegaVaughan1990, FaberJoubertKnillManteuffel1996, CerdanMarinMartinez2002} and references therein).

On the other hand, consider the case of MINRES.
Its algorithm is given as follows (cf.\ \cite{EisenstatOrtegaVaughan1990}).

\begin{algorithm}
\caption{MINRES method preconditioned by $\ell$ inner iterations.}
\label{alg:MRin}
\begin{algorithmic}[1]
	\STATE Let $\mathbf{x}_0 \in \mathbb{R}^n$ be the initial iterate, $\mathbf{w}_0 = \boldsymbol{0}$, and $\mathbf{w}_1 = \mathbf{b} - \mathbf{A} \mathbf{x}_0$.
	\STATE Apply $\ell$ steps of a stationary iterative method to $\mathbf{A} \mathbf{z} =\mathbf{w}_1$ to obtain $\mathbf{z}_1 = \mathbf{C}^{(\ell)} \mathbf{w}_1$.
	\STATE $\mathbf{s}_0 = \mathbf{s}_{-1} = \boldsymbol{0}$, $c_0 = -1$, $s_0 = 0$, $\beta_0 = 1$, $\gamma_1 = 0$, $\beta_1 = ({\mathbf{w}_1}^\mathsf{T} \mathbf{z}_1)^{\frac{1}{2}}$, $\varphi_0 = \xi_0 = \beta_1$
	\FOR{$k = 1, 2, \dots$ until convergence}
	\STATE $\mathbf{p}_k = \mathbf{A} \mathbf{z}_k$, $\alpha_k = \mathbf{z}_k^\mathsf{T} \mathbf{p}_k / \beta_k^2$, $\mathbf{w}_{k+1} = (1/\beta_k) \mathbf{p}_k - (\alpha_k / \beta_k) \mathbf{w}_k - (\beta_k / \beta_{k-1}) \mathbf{w}_{k-1}$
\STATE Apply $\ell$ steps of a stationary iterative method to $\mathbf{A} \mathbf{z} \!\!  = \!\mathbf{w}_{k+1}$ to obtain $\mathbf{z}_{k+1} \! \! = \! \mathbf{C}^{(\ell)} \mathbf{w}_{k+1}$.
	\STATE $\beta_{k+1} = (\mathbf{w}_{k+1}^\mathsf{T} \mathbf{z}_{k+1})^{\frac{1}{2}}$, $\zeta_k = c_{k-1} \gamma _k + s_{k-1} \alpha_k$, $\theta_k = s_{k-1} \gamma_k - c_{k-1} \alpha_k$, $\tau_{k+1} = s_{k-1} \beta_{k+1}$
	\STATE $\gamma_{k+1} = - c_{k-1} \beta_{k+1}$, $\eta_k = ({\theta_k}^2 + {\beta_{k+1}}^2)^{\frac{1}{2}}$, $c_k = \theta_k / \eta_k$, $s_k = \beta_{k+1} / \eta_k$
	\STATE $\varphi_k = c_k \xi_{k-1}$, $\xi_k = s_k \xi_{k-1}$, $\mathbf{s}_k = ((1/\beta_k) \mathbf{z}_k - \zeta_k \mathbf{s}_{k-1} - \tau_k \mathbf{s}_{k-2}) / \eta_k$, $\mathbf{x}_k = \mathbf{x}_{k-1} + \varphi_k \mathbf{s}_k$
	\ENDFOR
\end{algorithmic}
\end{algorithm}

The stationary iterative method applied to $\mathbf{A} \mathbf{z} = \mathbf{w}_k$ in lines 2 and 6, respectively, gives the same preconditioning and preconditioned matrices as those in Algorithm~\ref{alg:CGin}.

\subsection{Definiteness of inner-iteration preconditioning matrices.} \label{sec:definit}
In order to examine the definiteness of the inner-iteration preconditioning matrix $\mathbf{C}^{(\ell)}$, we extend \cite[Lemma~1 and Theorem~1]{Adams1985} to the general symmetric case $\mathbf{A} = \mathbf{A}^\mathsf{T}$.
We denote $\mathbf{A} \sim \mathbf{B}$ if $\mathbf{A}$ and $\mathbf{B}$ are similar and $\mathbf{A} \equiv \mathbf{B}$ if $\mathbf{A}$ and $\mathbf{B}$ are congruent.

\begin{lemma}\label{lm:ABC}
Assume that $\mathbf{A}$ is SPD and $\mathbf{A} \mathbf{B}$ is symmetric.
Then, the eigenvalues of $\mathbf{B}$ are positive (negative) if and only if $\mathbf{A} \mathbf{B}$ is positive (negative) definite.
\end{lemma}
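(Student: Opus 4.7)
The plan is to exploit the existence of a unique symmetric positive definite square root $\mathbf{S} = \mathbf{A}^{1/2}$ of the SPD matrix $\mathbf{A}$ and to produce a single auxiliary matrix that is simultaneously similar to $\mathbf{B}$ and congruent to $\mathbf{AB}$. The natural candidate is
\begin{equation*}
	\mathbf{T} := \mathbf{S}^{-1} (\mathbf{A} \mathbf{B}) \mathbf{S}^{-1} = \mathbf{S} \mathbf{B} \mathbf{S}^{-1},
\end{equation*}
where the second equality follows from $\mathbf{A} = \mathbf{S}^2$.

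First I would verify that $\mathbf{T}$ is symmetric. Since both $\mathbf{S}^{-1}$ and $\mathbf{AB}$ are symmetric, the first expression for $\mathbf{T}$ is manifestly a symmetric congruence transform of $\mathbf{AB}$; in particular $\mathbf{T}^{\mathsf{T}} = \mathbf{T}$. Next I would read $\mathbf{T}$ through its second expression $\mathbf{S} \mathbf{B} \mathbf{S}^{-1}$, which shows $\mathbf{T}$ is similar to $\mathbf{B}$. Therefore $\mathbf{B}$ has the same eigenvalues as $\mathbf{T}$ (in particular, they are all real), and $\mathbf{AB}$ is congruent to $\mathbf{T}$ via the nonsingular symmetric matrix $\mathbf{S}^{-1}$.

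To finish, I would invoke Sylvester's law of inertia: since $\mathbf{AB}$ and $\mathbf{T}$ are congruent symmetric matrices, they share the same inertia, i.e., the same numbers of positive, negative, and zero eigenvalues. Combined with the similarity $\mathbf{B} \sim \mathbf{T}$, this yields the chain: all eigenvalues of $\mathbf{B}$ are positive $\Longleftrightarrow$ all eigenvalues of $\mathbf{T}$ are positive $\Longleftrightarrow$ all eigenvalues of $\mathbf{AB}$ are positive $\Longleftrightarrow$ $\mathbf{AB}$ is positive definite (the last equivalence uses that $\mathbf{AB}$ is already known to be symmetric). The negative case is identical with signs reversed, giving the second equivalence in the statement.

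There is no real obstacle here beyond choosing the correct bridging matrix $\mathbf{T}$; the argument is essentially a bookkeeping exercise combining similarity (to transfer spectral information to $\mathbf{B}$) with congruence (to transfer inertia information to $\mathbf{AB}$). The only subtlety worth stating cleanly in the write-up is that similarity preserves eigenvalues while congruence only preserves their signs, and that both facts are needed because $\mathbf{B}$ itself need not be symmetric and $\mathbf{AB}$ need not be similar to $\mathbf{B}$ (as the trivial example $\mathbf{A} = 2\mathbf{I}$, $\mathbf{B} = \mathbf{I}$ already shows).
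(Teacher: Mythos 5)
Your proof is correct and follows essentially the same route as the paper, which disposes of the lemma with the one-line chain $\mathbf{A}\mathbf{B} \sim \mathbf{A}^{\frac{1}{2}}\mathbf{B}\mathbf{A}^{\frac{1}{2}} \equiv \mathbf{B}$. Your bridging matrix $\mathbf{T}=\mathbf{A}^{-\frac{1}{2}}(\mathbf{A}\mathbf{B})\mathbf{A}^{-\frac{1}{2}}=\mathbf{A}^{\frac{1}{2}}\mathbf{B}\mathbf{A}^{-\frac{1}{2}}$ is just the mirror image of the paper's, and is in fact the slightly cleaner choice since it is manifestly symmetric, so Sylvester's law of inertia applies to it directly even though $\mathbf{B}$ itself need not be symmetric.
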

\begin{proof}
The lemma follows from Sylvester's law of inertia and $\mathbf{A} \mathbf{B} \sim \mathbf{A}^{\frac{1}{2}} \mathbf{B} \mathbf{A}^{\frac{1}{2}} \equiv \mathbf{B}$.
\end{proof}

\begin{lemma} \label{lm:CAB}
If $\mathbf{A}$ is symmetric and definite (SD), i.e., either SPD or SND, and $\mathbf{B} = \mathbf{B}^\mathsf{T}$, then $\sigma(\mathbf{A} \mathbf{B}) \subset \mathbb{R}$.
\end{lemma}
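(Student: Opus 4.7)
The plan is to reduce to the SPD case and then use the same similarity trick already employed in Lemmas~\ref{lm:ABC} and \ref{lm:-ABC}. Concretely, first I would split the hypothesis into the two sub-cases SPD and SND.

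In the SPD sub-case, the square root $\mathbf{A}^{\frac{1}{2}}$ exists and is itself SPD, hence nonsingular. The key observation is then the factorization $\mathbf{A}\mathbf{B} = \mathbf{A}^{\frac{1}{2}}\bigl(\mathbf{A}^{\frac{1}{2}}\mathbf{B}\mathbf{A}^{\frac{1}{2}}\bigr)\mathbf{A}^{-\frac{1}{2}}$, which gives $\mathbf{A}\mathbf{B} \sim \mathbf{A}^{\frac{1}{2}}\mathbf{B}\mathbf{A}^{\frac{1}{2}}$. Because $\mathbf{B}=\mathbf{B}^{\mathsf{T}}$, the matrix $\mathbf{A}^{\frac{1}{2}}\mathbf{B}\mathbf{A}^{\frac{1}{2}}$ is symmetric, so its spectrum is real. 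Since similar matrices have identical spectra, this yields $\sigma(\mathbf{A}\mathbf{B}) \subset \mathbb{R}$.

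For the SND sub-case, I would simply apply the SPD case to the pair $(-\mathbf{A}, \mathbf{B})$: $-\mathbf{A}$ is SPD and $\mathbf{B}$ is still symmetric, so $\sigma(-\mathbf{A}\mathbf{B}) \subset \mathbb{R}$, and therefore $\sigma(\mathbf{A}\mathbf{B}) \subset \mathbb{R}$ as well. In fact, it is cleaner to remark at the outset that definiteness of $\mathbf{A}$ guarantees the existence of a nonsingular square root (real in the SPD case, taken from $-\mathbf{A}$ in the SND case), so a single similarity argument covers both cases.

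There is essentially no obstacle here: the argument is a direct reuse of the congruence/similarity identity $\mathbf{A}\mathbf{B} \sim \mathbf{A}^{\frac{1}{2}}\mathbf{B}\mathbf{A}^{\frac{1}{2}} \equiv \mathbf{B}$ already invoked in Lemma~\ref{lm:ABC}. The only point requiring a word of care is that Lemma~\ref{lm:ABC} additionally assumed $\mathbf{A}\mathbf{B}$ to be symmetric so as to conclude definiteness, whereas here we only want reality of the spectrum, for which the symmetry of $\mathbf{A}^{\frac{1}{2}}\mathbf{B}\mathbf{A}^{\frac{1}{2}}$ (which follows from $\mathbf{B}=\mathbf{B}^{\mathsf{T}}$ alone) is sufficient.
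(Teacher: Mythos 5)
Your proof is correct and follows essentially the same route as the paper's: reduce to the SPD case via the similarity $\mathbf{A}\mathbf{B} \sim \mathbf{A}^{\frac{1}{2}}\mathbf{B}\mathbf{A}^{\frac{1}{2}}$, note that the latter is symmetric because $\mathbf{B}=\mathbf{B}^{\mathsf{T}}$, and handle the SND case by passing to $-\mathbf{A}$. If anything, your write-up is slightly cleaner, since the paper's proof contains a small slip (it cites the symmetry of $\mathbf{A}^{\frac{1}{2}}\mathbf{B}\mathbf{A}^{-\frac{1}{2}}$ where $\mathbf{A}^{\frac{1}{2}}\mathbf{B}\mathbf{A}^{\frac{1}{2}}$ is meant).
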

\begin{proof}
Assume that $\mathbf{A}$ is SPD.
Then, we have $\sigma( \mathbf{A}^{\frac{1}{2}} \mathbf{B} \mathbf{A}^{-\frac{1}{2}}) \subset \mathbb{R}$.
Hence, the eigenvalues of $\mathbf{A} \mathbf{B} \sim \mathbf{A}^{\frac{1}{2}} \mathbf{B} \mathbf{A}^{\frac{1}{2}} \equiv \mathbf{B}$ are real.
On the other hand, assume that $\mathbf{A}$ is SND.
Since $- \mathbf{A}$ is SPD, $\sigma[(-\mathbf{A}) (-\mathbf{B})] \subset \mathbb{R}$.
\end{proof}

Note that similar statements to Lemma~\ref{lm:ABC} and \ref{lm:CAB} hold if exchanging the roles of $A$ and $B$.

\begin{lemma}\label{lm:oddsumpos}
If $\sigma(\mathbf{A}) \subset \mathbb{R}$, then the eigenvalues of $\sum_{i = 0}^{\ell-1} \mathbf{A}^i$ are positive for all $\ell$ odd.
\end{lemma}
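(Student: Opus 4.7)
The plan is to reduce the statement to a one-variable inequality via a spectral-mapping argument and then check positivity on the real line.

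First, I would put $\mathbf{A}$ into Jordan form $\mathbf{A} = \mathbf{P}\mathbf{J}\mathbf{P}^{-1}$, so that
\begin{equation*}
	\sum_{i=0}^{\ell-1}\mathbf{A}^i = \mathbf{P}\Bigl(\sum_{i=0}^{\ell-1}\mathbf{J}^i\Bigr)\mathbf{P}^{-1}.
\end{equation*}
Since $\mathbf{J}$ is upper triangular with the eigenvalues $\lambda \in \sigma(\mathbf{A})$ on the diagonal, every power $\mathbf{J}^i$ is upper triangular with $\lambda^i$ on the diagonal; hence $\sum_{i=0}^{\ell-1}\mathbf{J}^i$ is upper triangular with diagonal entries $s_\ell(\lambda) := \sum_{i=0}^{\ell-1}\lambda^i$ as $\lambda$ ranges over $\sigma(\mathbf{A})$ with algebraic multiplicity. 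Consequently, the eigenvalues of $\sum_{i=0}^{\ell-1}\mathbf{A}^i$ are exactly the real numbers $s_\ell(\lambda)$, $\lambda\in\sigma(\mathbf{A})$.

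Second, I would check that $s_\ell(\lambda) > 0$ for every $\lambda \in \mathbb{R}$ when $\ell$ is odd. For $\lambda = 1$, $s_\ell(1) = \ell > 0$. For $\lambda \neq 1$, use the geometric-sum formula
\begin{equation*}
	s_\ell(\lambda) = \frac{\lambda^\ell - 1}{\lambda - 1},
\end{equation*}
and note that because $\ell$ is odd the map $t \mapsto t^\ell$ is strictly increasing on $\mathbb{R}$, so $\lambda^\ell - 1$ has the same sign as $\lambda - 1$; the quotient is therefore strictly positive. This is precisely where the hypothesis $\sigma(\mathbf{A}) \subset \mathbb{R}$ is used, since the sign analysis breaks down for genuinely complex $\lambda$ (e.g., $\ell = 3$, $\lambda = e^{2\pi\mathrm{i}/3}$ yields $s_\ell(\lambda) = 0$).

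I do not foresee any substantive obstacle: the lemma is an elementary spectral-mapping observation paired with a one-variable sign analysis. The only mild care is in the Jordan reduction step, where one must observe that upper-triangularity is preserved under powers and sums so that the diagonal entries of the triangular factor really do enumerate all eigenvalues with multiplicity.
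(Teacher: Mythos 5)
Your proof is correct and follows essentially the same route as the paper's: the spectral mapping $\lambda \mapsto \sum_{i=0}^{\ell-1}\lambda^i$, the value $\ell$ at $\lambda = 1$, and the geometric-sum quotient $(\lambda^\ell - 1)/(\lambda - 1) > 0$ for real $\lambda \neq 1$ and odd $\ell$. The only difference is that you make explicit (via the Jordan form) the spectral-mapping step and the monotonicity argument for the sign of the quotient, both of which the paper leaves implicit.
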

\begin{proof}
If $\lambda$ is an eigenvalue of $\mathbf{A}$ not equal to $1$, then the corresponding eigenvalue of $\sum_{i=0}^{\ell - 1} \mathbf{A}^i$ satisfies $(1 - \lambda^\ell) / (1 - \lambda) > 0$.
If $\lambda = 1$ is an eigenvalue of $\mathbf{A}$, then the corresponding eigenvalue of $\sum_{i=0}^{\ell - 1} \mathbf{A}^i$ satisfies $\ell > 0$.
Hence, the eigenvalues of $\sum_{i=0}^{\ell - 1} \mathbf{A}^i$ are positive for all $\ell$ odd.
\end{proof}

The following theorem gives conditions such that the inner-iteration preconditioning matrix for a symmetric matrix is SD.

\begin{theorem}[\textrm{cf.\ \cite[Theorem~1]{Adams1985}}] \label{th:definit}
Let $\mathbf{A}$ be a symmetric matrix that has the splitting $A = M - N$.
Suppose that $\ell \in \mathbb{N}$, $\mathbf{M} = \mathbf{M}^\mathsf{T}$, $\mathbf{N}$, $\mathbf{H} = \mathbf{M}^{-1} \mathbf{N}$, and  $\mathbf{C}^{(\ell)} = \sum_{i = 0}^{\ell - 1} \mathbf{H}^i \mathbf{M}^{-1}$.
Then, the following hold.
\begin{enumerate}
	\item $\mathbf{C}^{(\ell)}$ is symmetric.
	\item For $\ell$ odd, $\mathbf{C}^{(\ell)}$ is positive definite if and only if $\mathbf{M}$ is positive definite.
	\item for $\ell$ even, $\mathbf{C}^{(\ell)}$ is positive definite if and only if $\mathbf{M} + \mathbf{N}$ is positive definite.
\end{enumerate}
\end{theorem}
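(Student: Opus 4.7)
The plan is to prove the three assertions in order, using that the ``positive'' and ``negative'' cases in Parts~2 and~3 are parallel: replacing $\mathbf{M}$ by $-\mathbf{M}$ throughout (hence $\mathbf{N}$ by $-\mathbf{N}$ and $\mathbf{H}$ unchanged) converts every SPD claim into the corresponding SND claim, so I only need to treat the positive variant explicitly.

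For Part~1, I would begin from the fact that $\mathbf{N} = \mathbf{M} - \mathbf{A}$ is symmetric, so $\mathbf{H}^{\mathsf{T}} = \mathbf{N} \mathbf{M}^{-1}$ and the commutation identity $\mathbf{M}^{-1} \mathbf{H}^{\mathsf{T}} = \mathbf{M}^{-1} \mathbf{N} \mathbf{M}^{-1} = \mathbf{H} \mathbf{M}^{-1}$ holds. A simple induction on $i$ promotes it to $\mathbf{M}^{-1} (\mathbf{H}^{\mathsf{T}})^{i} = \mathbf{H}^{i} \mathbf{M}^{-1}$. Transposing a single summand then gives $(\mathbf{H}^{i} \mathbf{M}^{-1})^{\mathsf{T}} = \mathbf{M}^{-1} (\mathbf{H}^{\mathsf{T}})^{i} = \mathbf{H}^{i} \mathbf{M}^{-1}$, so each summand (and hence $\mathbf{C}^{(\ell)}$) is symmetric.

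For Part~2 (odd $\ell$) in the forward direction, I would chain Lemmas~\ref{lm:CAB}, \ref{lm:oddsumpos}, and \ref{lm:-ABC}. Assuming $\mathbf{M}$ SPD, $\mathbf{M}^{-1}$ is SPD (hence SD), so Lemma~\ref{lm:CAB} gives $\sigma(\mathbf{H}) = \sigma(\mathbf{M}^{-1} \mathbf{N}) \subset \mathbb{R}$; Lemma~\ref{lm:oddsumpos} then makes the eigenvalues of $\mathbf{C}^{(\ell)} \mathbf{M} = \sum_{i=0}^{\ell-1} \mathbf{H}^{i}$ all positive; and Lemma~\ref{lm:-ABC}, applied to the symmetric product $\mathbf{C}^{(\ell)} = \bigl(\sum_{i} \mathbf{H}^{i}\bigr) \mathbf{M}^{-1}$ with right factor $\mathbf{M}^{-1}$ SPD, concludes that $\mathbf{C}^{(\ell)}$ is SPD. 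For Part~3 (even $\ell$) in the forward direction, the key tool is the two-step recursion $\mathbf{C}^{(\ell)} = \mathbf{C}^{(2)} + \mathbf{H} \mathbf{C}^{(\ell-2)} \mathbf{H}^{\mathsf{T}}$, which follows from the geometric-series definition of $\mathbf{C}^{(\ell)}$ combined with the Part~1 commutation. Unrolling gives $\mathbf{C}^{(\ell)} = \sum_{i=0}^{\ell/2 - 1} \mathbf{H}^{i} \mathbf{C}^{(2)} (\mathbf{H}^{\mathsf{T}})^{i}$, where $\mathbf{C}^{(2)} = \mathbf{M}^{-1} (\mathbf{M} + \mathbf{N}) \mathbf{M}^{-1}$ is congruent to $\mathbf{M} + \mathbf{N}$ via the nonsingular $\mathbf{M}^{-1}$. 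Thus $\mathbf{M} + \mathbf{N}$ SPD forces $\mathbf{C}^{(2)}$ SPD, every summand is PSD, and the $i = 0$ summand is SPD, which makes the full sum SPD.

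The main obstacle will be the reverse direction of the iffs. Since the forward direction applied with $-\mathbf{M}$ (resp.\ $-(\mathbf{M}+\mathbf{N})$) already excludes the opposite-sign definiteness, only the indefinite case remains to be ruled out. Here I would invoke Sylvester's law of inertia through the congruences $\mathbf{M} \mathbf{C}^{(\ell)} \mathbf{M} \equiv \mathbf{C}^{(\ell)}$ and, when $\mathbf{C}^{(\ell)}$ is SPD, $(\mathbf{C}^{(\ell)})^{1/2} \mathbf{M} (\mathbf{C}^{(\ell)})^{1/2} \equiv \mathbf{M}$, together with the similarity $\sigma(\mathbf{C}^{(\ell)} \mathbf{M}) = \sigma(p_{\ell}(\mathbf{H}))$ for the polynomial $p_{\ell}(x) = \sum_{i=0}^{\ell-1} x^{i}$. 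For $\ell$ odd one has $p_{\ell} > 0$ on $\mathbb{R}$, so any negative eigenvalue of $p_{\ell}(\mathbf{H})$ must come from a complex-conjugate pair in $\sigma(\mathbf{H})$ and therefore arises in pairs of equal value; the resulting inertia-matching constraints between $\mathbf{C}^{(\ell)}$ and $\mathbf{M}$ rule out the indefinite configurations and close the iff. The argument for even $\ell$ is analogous after substituting $\mathbf{M} + \mathbf{N}$ for $\mathbf{M}$ via the factor $\mathbf{I} + \mathbf{H} = \mathbf{M}^{-1}(\mathbf{M} + \mathbf{N})$.
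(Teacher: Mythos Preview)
Your Part~1 and the forward half of Part~2 coincide with the paper's argument. For the forward half of Part~3 you take a genuinely different route: the paper factors
\[
\mathbf{M}\mathbf{C}^{(\ell)}\mathbf{M} \;=\; (\mathbf{M}+\mathbf{N})\,\mathbf{G},\qquad \mathbf{G}=\sum_{j=0}^{\ell/2-1}\mathbf{H}^{2j},
\]
and then reuses Lemmas~\ref{lm:CAB}, \ref{lm:oddsumpos}, \ref{lm:ABC} exactly as in Part~2, whereas you write $\mathbf{C}^{(\ell)}=\sum_{j=0}^{\ell/2-1}\mathbf{H}^{j}\mathbf{C}^{(2)}(\mathbf{H}^{\mathsf T})^{j}$ as a sum of matrices each congruent to $\mathbf{C}^{(2)}\equiv\mathbf{M}+\mathbf{N}$. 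Both are correct; yours dispenses with the eigenvalue lemmas for that implication, but the paper's factorization is set up so that the converse implication follows by the same mechanism with the roles of the definite matrices swapped.

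The genuine gap is in your reverse directions. The observation that negative eigenvalues of $p_{\ell}(\mathbf{H})$ can only come from complex-conjugate pairs in $\sigma(\mathbf{H})$ and hence occur in equal pairs, together with the inertia identity between $\mathbf{M}$ and $(\mathbf{C}^{(\ell)})^{1/2}\mathbf{M}(\mathbf{C}^{(\ell)})^{1/2}\sim p_{\ell}(\mathbf{H})$, does \emph{not} by itself rule out $\mathbf{M}$ indefinite: nothing you have written excludes, for instance, two negative and $n-2$ positive eigenvalues. Likewise, your congruence-sum identity does not reverse, since an SPD sum $\sum_{j}\mathbf{H}^{j}\mathbf{C}^{(2)}(\mathbf{H}^{\mathsf T})^{j}$ does not force the base matrix $\mathbf{C}^{(2)}$ to be SPD. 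The paper avoids inertia counting entirely by running the forward chain with the roles swapped. For Part~2: once $\mathbf{C}^{(\ell)}$ is SPD, Lemma~\ref{lm:CAB} (now with $\mathbf{C}^{(\ell)}$ as the definite factor) gives $\sigma(\mathbf{C}^{(\ell)}\mathbf{M})=\sigma\!\bigl(\sum_{i}\mathbf{H}^{i}\bigr)\subset\mathbb{R}$, Lemma~\ref{lm:oddsumpos} is invoked for positivity, and Lemma~\ref{lm:ABC} with $\mathbf{A}=(\mathbf{C}^{(\ell)})^{-1}$, $\mathbf{B}=\sum_{i}\mathbf{H}^{i}$, $\mathbf{A}\mathbf{B}=\mathbf{M}$ yields $\mathbf{M}$ SPD. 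For Part~3 the same chain is applied to the factorization $\mathbf{M}\mathbf{C}^{(\ell)}\mathbf{M}=(\mathbf{M}+\mathbf{N})\,\mathbf{G}$, with the known SPD matrix $\mathbf{M}\mathbf{C}^{(\ell)}\mathbf{M}\equiv\mathbf{C}^{(\ell)}$ supplying the definiteness (via Lemmas~\ref{lm:-CAB} and \ref{lm:-ABC}) to recover $\mathbf{M}+\mathbf{N}$ SPD.
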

\begin{proof}
Since $\mathbf{N} = \mathbf{N}^\mathsf{T}$, $\mathbf{M}^{-1} \! \mathbf{N} \mathbf{M}^{-1} \! \mathbf{N} \cdots \mathbf{M}^{-1}$ is symmetric.
Hence, $\mathbf{C}^{(\ell)}$ is symmetric.

Assume that $\ell$ is odd and $\mathbf{M}$ is SPD.
Then, from Lemma~\ref{lm:CAB}, the eigenvalues of $\mathbf{H} = \mathbf{M}^{-1} (\mathbf{M} -\mathbf{A})$ are real. 
Since $\mathbf{M}^{-1}$ is SPD, noting Lemma~\ref{lm:oddsumpos}, $\mathbf{C}^{(\ell)} = (- \sum_{i = 0}^{\ell - 1} \mathbf{H}^i ) (- \mathbf{M}^{-1})$ is SPD.
On the other hand, assume that $\mathbf{C}^{(\ell)}$ is SPD.
Then, the eigenvalues of $\sum_{i = 0}^{\ell - 1} \mathbf{H}^i = \mathbf{C}^{(\ell)} \mathbf{M}$ are real from Lemma~\ref{lm:CAB}, and positive from Lemma~\ref{lm:oddsumpos}.
Hence, from Lemma~\ref{lm:ABC}, $\mathbf{M} = ( \mathbf{C}^{(\ell)} )^{-1} \sum_{i = 0}^{\ell - 1} \mathbf{H}^i = ( - \mathbf{C}^{(\ell)} )^{-1} ( - \sum_{i = 0}^{\ell - 1} \mathbf{H}^i )$ is positive definite.

Since $\ell$ is even, we have
\begin{align}
	\mathbf{M} \mathbf{C}^{(\ell)} \mathbf{M} & = \mathbf{M} + \mathbf{M} \mathbf{H} + \mathbf{M} \mathbf{H}^2 + \mathbf{M} \mathbf{H}^3 + \cdots + \mathbf{M} \mathbf{H}^{\ell-1} \notag \\
	& = ( \mathbf{M} + \mathbf{M} \mathbf{H}) + (\mathbf{M} + \mathbf{M} \mathbf{H}) \mathbf{H}^2 + (\mathbf{M} + \mathbf{M} \mathbf{H}) \mathbf{H}^4 + \cdots + (\mathbf{M} + \mathbf{M} \mathbf{H} ) \mathbf{H}^{\ell - 2} \notag \\
	& = (\mathbf{M} + \mathbf{N}) (\mathbf{I} + \mathbf{H}^2 + \mathbf{H}^4 + \cdots + \mathbf{H}^{\ell - 2}). 
	\label{eq:MCM}
\end{align}

Assume that $\mathbf{M} + \mathbf{N}$ is SPD.
Since $\mathbf{G} = \sum_{i = 0}^{(\ell - 2) / 2} (\mathbf{H}^2)^i = (\mathbf{M} + \mathbf{N})^{-1} \mathbf{M} \mathbf{C}^{(\ell)} \mathbf{M}$, we have $\sigma (\mathbf{G}) \subset \mathbb{R}$ from Lemma~\ref{lm:CAB}, and $\lambda > 0$ for all $\lambda \in \sigma (G)$ from Lemma~\ref{lm:oddsumpos}.
Hence, $\mathbf{M} \mathbf{C}^{(\ell)} \mathbf{M} = (\mathbf{M} + \mathbf{N}) \mathbf{G} = - (\mathbf{M} + \mathbf{N}) (- \mathbf{G}) \equiv \mathbf{C}^{(\ell)}$ is positive definite.

On the other hand, assume that $\mathbf{C}^{(\ell)} \equiv \mathbf{M} \mathbf{C}^{(\ell)} \mathbf{M}$ is SPD.
Then, from \eqref{eq:MCM}, $\mathbf{M} + \mathbf{N}$ is nonsingular.
Since $(\mathbf{M} + \mathbf{N})^{-1}$ is symmetric, the eigenvalues of $\mathbf{G} = (\mathbf{M} + \mathbf{N})^{-1} \mathbf{M} \mathbf{C}^{(\ell)} \mathbf{M}$ are real from Lemma~\ref{lm:CAB}, and positive from Lemma~\ref{lm:oddsumpos}.
Hence, $\mathbf{M} + \mathbf{N} = \mathbf{M} \mathbf{C}^{(\ell)} \mathbf{M} \mathbf{G}^{-1} = (- \mathbf{M} \mathbf{C}^{(\ell)} \mathbf{M}) (- \mathbf{G}^{-1})$ is positive definite from Lemma~\ref{lm:ABC}.
\end{proof}

Letting $\mathbf{A}$ be positive definite in Theorem~\ref{th:definit}, we obtain \cite[Theorem~1]{Adams1985} as a corollary.

\subsection{Convergence conditions.}
We give sufficient conditions such that CG and MINRES preconditioned by inner iterations determine a solution of symmetric linear systems.

\begin{theorem} \label{th:iCGconv}
Assume that $\mathbf{A} = \mathbf{A}^\mathsf{T}$ is not necessarily nonsingular and $\mathbf{M} = \mathbf{M}^\mathsf{T}$ is nonsingular such that $\mathbf{A} = \mathbf{M} - \mathbf{N}$.
Then, CG preconditioned by $\ell$ steps of the inner iterations $\mathbf{C}^{(\ell)}$ defined above with $\mathbf{M}$ definite for $\ell$ odd and $\mathbf{M} + \mathbf{N}$ definite for $\ell$ even, determines a solution of $\mathbf{A} \mathbf{x} = \mathbf{b}$ with $\mathbf{A}$ SPSD for all $\mathbf{b} \in \mathcal{R}(\mathbf{A})$ and for all $\mathbf{x}_0 \in \mathbb{R}^n$.
\end{theorem}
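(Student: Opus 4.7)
The plan is to deduce Theorem \ref{th:iCGconv} as a direct consequence of Theorem \ref{th:definit} together with Lemma \ref{lm:PCG}. First, the hypothesis on $\mathbf{M}$ (when $\ell$ is odd) or on $\mathbf{M}+\mathbf{N}$ (when $\ell$ is even), combined with Theorem \ref{th:definit}, immediately gives that $\mathbf{C}^{(\ell)}$ is symmetric and either positive or negative definite. Invoking the remark preceding Lemma \ref{lm:PCG}, which states that a symmetric negative definite preconditioning matrix may be replaced by its negation without altering the underlying preconditioned system or its Krylov subspace, I would reduce without loss of generality to the case where $\mathbf{C}^{(\ell)}$ is SPD, and then set $\mathbf{P} = (\mathbf{C}^{(\ell)})^{-1}$, which is itself SPD.

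The second step is to identify Algorithm \ref{alg:CGin} with PCG applied to $\mathbf{A}\mathbf{x} = \mathbf{b}$ using this $\mathbf{P}$. The key observation is that the inner iterations in lines 2 and 5, started from $\mathbf{z}^{(0)} \in \mathcal{N}(\mathbf{H})$, produce exactly $\mathbf{z}_k = \mathbf{C}^{(\ell)} \mathbf{r}_k = \mathbf{P}^{-1} \mathbf{r}_k$, so lines 4 and 6 coincide with the standard PCG recurrence; consequently the iterates lie in $\mathbf{x}_0 + \mathcal{K}_k(\mathbf{P}^{-1}\mathbf{A}, \mathbf{P}^{-1}\mathbf{r}_0)$. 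Applying Lemma \ref{lm:PCG} with the SPSD matrix $\mathbf{A}$ and the SPD preconditioner $\mathbf{P}$ then yields that PCG, and hence Algorithm \ref{alg:CGin}, determines a solution of $\mathbf{A}\mathbf{x} = \mathbf{b}$ for every $\mathbf{b} \in \mathcal{R}(\mathbf{A})$ and every $\mathbf{x}_0 \in \mathbb{R}^n$.

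The bulk of the genuinely substantive work is already encapsulated in Theorem \ref{th:definit}, so the remainder of the argument is essentially bookkeeping. The only delicate point is the reduction from the SND to the SPD case: one must verify that replacing $\mathbf{C}^{(\ell)}$ by $-\mathbf{C}^{(\ell)}$ (for instance by flipping the sign of $\mathbf{M}^{-1}$ in the inner iteration, or equivalently multiplying both sides of the preconditioned system by $-1$) leaves the iterates of Algorithm \ref{alg:CGin} unchanged. Once this is observed, chaining the two cited results produces the conclusion without further computation.
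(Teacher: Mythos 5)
Your proposal is correct and follows essentially the same route as the paper: invoke Theorem \ref{th:definit} to get that $\mathbf{C}^{(\ell)}$ is symmetric definite, reduce to the SPD case by the sign-change remark preceding Lemma \ref{lm:PCG}, and then apply Lemma \ref{lm:PCG} to the preconditioned system with $\mathbf{P}^{-1} = \mathbf{C}^{(\ell)}$, i.e.\ with the $(\mathbf{C}^{(\ell)})^{-1}$ inner product. The extra bookkeeping you supply (identifying Algorithm \ref{alg:CGin} with PCG and checking that negating $\mathbf{C}^{(\ell)}$ leaves the iterates unchanged) is consistent with, and slightly more explicit than, the paper's two-line argument.
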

\begin{proof}
From Theorem~\ref{th:definit}, $\mathbf{C}^{(\ell)}$ is SD for all $\ell \in \mathbb{N}$.
Lemma~\ref{lm:PCG} applied to CG for $\mathbf{C}^{(\ell)} \mathbf{A} ^\mathsf{T} \mathbf{x} = \mathbf{C}^{(\ell)} \mathbf{b}$ with the ${\mathbf{C}^{(\ell)}}^{-1}$ inner product gives the theorem.
\end{proof}

\begin{theorem} \label{th:iMRconv}
Under the same assumption in Theorem~\ref{th:iCGconv}, MINRES preconditioned by $\ell$ steps of the inner iterations $\mathbf{C}^{(\ell)}$ defined above with $\mathbf{M}$ definite for $\ell$ odd and $\mathbf{M} + \mathbf{N}$ definite for $\ell$ even, determines a solution of $\mathbf{A} \mathbf{x} = \mathbf{b}$ for all $\mathbf{b} \in \mathcal{R}(\mathbf{A})$ and for all $\mathbf{x}_0 \in \mathbb{R}^n$.
\end{theorem}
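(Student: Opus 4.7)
The plan is to follow the same two-step strategy used in the proof of Theorem \ref{th:iCGconv}, simply substituting Lemma \ref{lm:PMR} for Lemma \ref{lm:PCG}. This substitution is the reason the conclusion holds under the weaker hypothesis that $\mathbf{A}$ is only symmetric rather than SPSD, since Lemma \ref{lm:PMR} itself requires only $\mathbf{A} = \mathbf{A}^\mathsf{T}$ and $\mathbf{P}$ SPD.

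First I would invoke Theorem \ref{th:definit} to conclude that $\mathbf{C}^{(\ell)}$ is symmetric and definite: part 2 handles $\ell$ odd under the hypothesis that $\mathbf{M}$ is definite, and part 3 handles $\ell$ even under the hypothesis that $\mathbf{M}+\mathbf{N}$ is definite, while part 1 gives symmetry in both parities. By the remark preceding Lemma \ref{lm:PCG}, we may without loss of generality assume $\mathbf{C}^{(\ell)}$ is SPD, because the SND case reduces to the SPD case by negating the preconditioner (equivalently, by multiplying the equation through by $-1$). Setting $\mathbf{P} = (\mathbf{C}^{(\ell)})^{-1}$ therefore yields an SPD preconditioner that is well defined.

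Next I would identify Algorithm \ref{alg:MRin} with PMR applied to $\mathbf{A}\mathbf{x} = \mathbf{b}$ with preconditioner $\mathbf{P}$, as derived in Section \ref{sec:iCG}: the stationary-iteration step in lines 2 and 6 realises the action of $\mathbf{C}^{(\ell)} = \mathbf{P}^{-1}$, so the algorithm is MR on $\mathbf{P}^{-1}\mathbf{A}\mathbf{x} = \mathbf{P}^{-1}\mathbf{b}$ in the $\mathbf{P}$-inner product, which is in turn equivalent to PMR applied to \eqref{eq:LS} (cf.\ the discussion before Lemma \ref{lm:PMR}). Lemma \ref{lm:PMR} then applies verbatim and yields a solution of $\mathbf{A}\mathbf{x} = \mathbf{b}$, of the form \eqref{eq:PKsol}, for every $\mathbf{b} \in \mathcal{R}(\mathbf{A})$ and every $\mathbf{x}_0 \in \mathbb{R}^n$.

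The only delicate point is the identification of Algorithm \ref{alg:MRin} with PMR in the $\mathbf{P}$-inner product: once one observes that every scalar appearing in the algorithm, such as $(\mathbf{w}_k, \mathbf{z}_{k+1}) = (\mathbf{w}_k, \mathbf{C}^{(\ell)}\mathbf{w}_k)$, is precisely a $\mathbf{P}$-inner product of the unpreconditioned residuals, this is standard Lanczos bookkeeping and introduces no new difficulty beyond what was already resolved for Theorem \ref{th:iCGconv}. Hence no genuine obstacle is anticipated, and the proof reduces to chaining Theorem \ref{th:definit} with Lemma \ref{lm:PMR}.
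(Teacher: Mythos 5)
Your proposal is correct and takes essentially the same route as the paper, whose proof likewise chains Theorem \ref{th:definit} (to conclude $\mathbf{C}^{(\ell)}$ is symmetric definite, with the SND case reduced to SPD by a sign change) with Lemma \ref{lm:PMR} applied to MR on the preconditioned system in the $(\mathbf{C}^{(\ell)})^{-1}$ inner product. The additional detail you supply on identifying Algorithm \ref{alg:MRin} with PMR in the $\mathbf{P}$-inner product is consistent with the paper's discussion in Section \ref{sec:iCG} and introduces no divergence from its argument.
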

\begin{proof}
From Theorem~\ref{th:definit}, $\mathbf{C}^{(\ell)}$ is SD for all $\ell \in \mathbb{N}$.
Lemma~\ref{lm:PMR} applied to MINRES for $\mathbf{C}^{(\ell)} \mathbf{A} ^\mathsf{T} \mathbf{x} = \mathbf{C}^{(\ell)} \mathbf{b}$ the ${\mathbf{C}^{(\ell)}}^{-1}$ inner product gives the theorem.
\end{proof}

The solutions determined by these methods are given similarly to \eqref{eq:PKsol} with $\mathbf{P}^{-1} = \mathbf{C}^{(\ell)}$.

Theorems~\ref{th:iCGconv} and \ref{th:iMRconv} will be applied to CG and MINRES-type methods for least squares and minimum-norm solution problems in Sections~\ref{sec:aplLS} and \ref{sec:aplminsol}.

We note a relationship among definiteness, P-regularity, and semiconvergence.
For a square matrix $\mathbf{A}$, we say the splitting $\mathbf{A} = \mathbf{M} - \mathbf{N}$ is P-regular if $\mathbf{M}$ is nonsingular and $\mathbf{M} + \mathbf{N}$ is positive definite, i.e., the symmetric part of $\mathbf{M} + \mathbf{N}$ is SPD.
Let $\mathbf{A} = \mathbf{M} - \mathbf{N}$ be P-regular for $\mathbf{A}$ symmetric, equivalently $\mathbf{M} + \mathbf{M}^\mathsf{T} - \mathbf{A}$ positive definite.
Note that if $\mathbf{M} = \mathbf{M}^\mathsf{T}$, then $\mathbf{M} + \mathbf{M}^\mathsf{T} - \mathbf{A} = 2 \mathbf{M} - \mathbf{A} = \mathbf{M} + \mathbf{N}$.
Then, $\mathbf{H} = \mathbf{M}^{-1} \mathbf{N}$ is semiconvergent, i.e., $\lim_{i \rightarrow \infty} \mathbf{H}^i$ exists, if and only if $\mathbf{A}$ is positive semidefinite \cite[Theorem~2]{Keller1965}.
Hence, for $\mathbf{A}$ indefinite, $\mathbf{H} = \mathbf{M}^{-1} \mathbf{N}$ is not semiconvergent even if $\mathbf{A} = \mathbf{M} - \mathbf{N}$ is P-regular.
Therefore, from Theorem~\ref{th:iMRconv}, MINRES preconditioned by the inner iterations can determine a solution of $\mathbf{A} \mathbf{x} = \mathbf{b}$ even if $\mathbf{H}$ is not semiconvergent, i.e., divergent.
For exmaple, if $\mathbf{A} = \diag (1, -1) = \mathbf{M} - \mathbf{N}$, $\mathbf{M} = \mathbf{I}$, and $\mathbf{N} = \diag (0, 2)$, then $\mathbf{M}$ and $\mathbf{M} + \mathbf{N}$ are SPD but $\mathbf{H} = \mathbf{M}^{-1} \mathbf{N} = \mathbf{N}$ is not semiconvergent.

\subsection{Convergence bounds.} \label{sec:bound}
Consider convergence bounds of CG and MINRES preconditioned by inner iterations.
For the definiteness of the preconditioning matrix, assume that $\mathbf{C}^{(\ell)}$ is SPD, or $\mathbf{M}$ and $\mathbf{M}+\mathbf{N}$ are definite for $\ell$ both odd and even from Theorem~\ref{th:definit}.

First, we focus on CG.
Assume that $\mathbf{A}$ is SPSD and let $\mathbf{H} = \mathbf{M}^{-1} \mathbf{N}$.
From the proof of Theorem~\ref{th:definit}, we have $\sigma(\mathbf{H}) \subset \mathbb{R}$.
Denote the pseudo spectral radius of $\mathbf{H}$ by $\nu (\mathbf{H}) = \max \lbrace |\lambda|: \lambda \in \sigma (\mathbf{H}) \backslash \lbrace 1 \rbrace \rbrace$ and the largest and smallest eigenvalues of $\mathbf{H}$ not equal to $1$ by $\lambda_{\max}(\mathbf{H})$ and $\lambda_{\min}(\mathbf{H})$, respectively.
Since $\mathbf{H}$ is semiconvergent \cite[Theorem~2]{Keller1965}, equivalently $\nu(\mathbf{H}) < 1$ and the eigenvalues of $\mathbf{H}$ equal to $1$ are simple \cite{Hensel1926}, we have 
\begin{align*}
	\kappa_2 (\mathbf{C}^{(\ell)} \mathbf{A}) = 
	\begin{cases}
		[1 - \lambda_{\max}(\mathbf{H})] / [1 - \lambda_{\min}(\mathbf{H})] & \mbox{for} \quad \ell \mbox{ odd}, \\
		(1 - \delta^\ell) / [1 - \nu(\mathbf{H})^\ell] & \mbox{for} \quad \ell \mbox{ even},
	\end{cases}	
\end{align*}
where $\delta$ is the eigenvalue with the smallest absolute value of $\mathbf{H}$.
If $\kappa^{(\ell)} = \kappa_2 (\mathbf{C}^{(\ell)} \mathbf{A})$, then an error bound of CG preconditioned by $\ell$ inner iterations is given as $\| \mathbf{e}_k \|_A \leq 2 [(\sqrt{\kappa^{(\ell)}} - 1) / (\sqrt{\kappa^{(\ell)}} + 1)]^k \| \mathbf{e}_0 \|_A$.
Thus, similar arguments in \cite[Section 2.2]{Adams1985} can be applied to the present SPSD (SNSD) case, defining the smallest eigenvalue of the iteration matrix by $\lambda_r$.
In order to avoid repetition, we omit the detail.

On the other hand, we give a bound of MINRES preconditioned by $\ell$ inner iterations.

\begin{theorem} \label{th:iMRb}
If $\mathbf{A}$ is SPSD and $\mathbf{H}$ is semiconvergent, then the $k$th residual $\mathbf{r}_k$ of MINRES preconditioned by $\ell$ steps of the inner iterations define above satisfies 
\begin{align}
	\| \check{\mathbf{r}}_k \|_2 \leq \min \left[ \nu (\mathbf{H})^{k \ell}, 2 \left( \frac{\sqrt{\kappa^{(\ell)}} - 1}{\sqrt{\kappa^{(\ell)}} + 1} \right)^k \right] \| \check{\mathbf{r}}_0 \|_2
	\label{eq:iMRb}
\end{align}
for all $\mathbf{b} \in \mathcal{R}(\mathbf{A})$ and for all $\mathbf{x}_0 \in \mathbb{R}^n$, where $\check{\mathbf{r}}_k = {\mathbf{C}^{(\ell)}}^{\frac{1}{2}} \mathbf{r}_k$.
\end{theorem}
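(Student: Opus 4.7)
My plan is to pass to the symmetrically preconditioned system $\hat{\mathbf{A}} \hat{\mathbf{x}} = \hat{\mathbf{b}}$ with $\hat{\mathbf{A}} = {\mathbf{C}^{(\ell)}}^{\frac{1}{2}} \mathbf{A} {\mathbf{C}^{(\ell)}}^{\frac{1}{2}}$ and $\check{\mathbf{r}}_k = \hat{\mathbf{r}}_k$, and then invoke the polynomial-minimization property of MR. Under the standing assumption of Section \ref{sec:bound}, $\mathbf{C}^{(\ell)}$ is SPD, and since $\mathbf{A}$ is SPSD, $\hat{\mathbf{A}}$ is SPSD; moreover $\hat{\mathbf{A}} \sim \mathbf{C}^{(\ell)} \mathbf{A} = \mathbf{I} - \mathbf{H}^\ell$, so $\sigma(\hat{\mathbf{A}}) = \{ 1 - \mu^\ell : \mu \in \sigma(\mathbf{H}) \}$. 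Semiconvergence of $\mathbf{H}$ tells us that the zero eigenvalues of $\hat{\mathbf{A}}$ are exactly those produced by the simple eigenvalues $\mu = 1$ of $\mathbf{H}$, while the remaining $\mu$ satisfy $|\mu| < 1$. Since $\mathbf{b} \in \mathcal{R}(\mathbf{A})$ forces $\mathbf{r}_0 \in \mathcal{R}(\mathbf{A})$, one obtains $\check{\mathbf{r}}_0 = {\mathbf{C}^{(\ell)}}^{\frac{1}{2}} \mathbf{r}_0 \in {\mathbf{C}^{(\ell)}}^{\frac{1}{2}} \mathcal{R}(\mathbf{A}) = \mathcal{R}(\hat{\mathbf{A}})$, i.e., $\check{\mathbf{r}}_0 \perp \mathcal{N}(\hat{\mathbf{A}})$. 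By the MR minimization property, $\check{\mathbf{r}}_k = p_k(\hat{\mathbf{A}}) \check{\mathbf{r}}_0$ for some polynomial $p_k$ of degree at most $k$ with $p_k(0) = 1$ that is optimal in $\| \cdot \|_2$, so $\| \check{\mathbf{r}}_k \|_2 \leq \| q(\hat{\mathbf{A}}) \check{\mathbf{r}}_0 \|_2$ for any admissible trial polynomial $q$.

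For the first term in the minimum I would pick $q(\lambda) = (1 - \lambda)^k$, so that $q(\hat{\mathbf{A}})$ has eigenvalues $\mu^{k \ell}$. Expanding $\check{\mathbf{r}}_0$ in an orthonormal eigenbasis of $\hat{\mathbf{A}}$, the components associated with $\mu = 1$ vanish by the range-space observation above, and a Parseval-style estimate yields $\| q(\hat{\mathbf{A}}) \check{\mathbf{r}}_0 \|_2 \leq \max_{\mu \in \sigma(\mathbf{H}) \setminus \{ 1 \}} |\mu|^{k \ell} \| \check{\mathbf{r}}_0 \|_2 = \nu(\mathbf{H})^{k \ell} \| \check{\mathbf{r}}_0 \|_2$.

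For the second term I would take $q$ to be the shifted and scaled Chebyshev polynomial of degree $k$ on the interval spanned by the nonzero eigenvalues of $\hat{\mathbf{A}}$, normalized by $q(0) = 1$. The classical Chebyshev estimate delivers $\max |q(\lambda)| \leq 2 \bigl[ ( \sqrt{\kappa} - 1 ) / ( \sqrt{\kappa} + 1 ) \bigr]^k$ with $\kappa = \kappa^{(\ell)} = \kappa_2(\mathbf{C}^{(\ell)} \mathbf{A})$, since $\mathbf{C}^{(\ell)} \mathbf{A}$ and $\hat{\mathbf{A}}$ share their spectra. The same eigenbasis argument again lifts this scalar bound to $\| q(\hat{\mathbf{A}}) \check{\mathbf{r}}_0 \|_2 \leq 2 [ ( \sqrt{\kappa^{(\ell)}} - 1 ) / ( \sqrt{\kappa^{(\ell)}} + 1 ) ]^k \| \check{\mathbf{r}}_0 \|_2$. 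Taking the pointwise minimum of the two estimates yields \eqref{eq:iMRb}.

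The main delicacy is that $\hat{\mathbf{A}}$ is singular whenever $\mathbf{A}$ is, and a priori the contribution of the null eigenvalue $\lambda = 0$, where every admissible $q$ takes the value $1$, would destroy both decay rates. The saving point, which has to be used twice, is that $\check{\mathbf{r}}_0 \in \mathcal{R}(\hat{\mathbf{A}}) \perp \mathcal{N}(\hat{\mathbf{A}})$ under $\mathbf{b} \in \mathcal{R}(\mathbf{A})$, so the relevant maxima of $|q|$ are in fact taken only over the nonzero part of $\sigma(\hat{\mathbf{A}})$, on which both the Neumann-type and the Chebyshev-type estimates remain valid.
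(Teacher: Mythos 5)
Your proposal is correct and follows essentially the same route as the paper: both rest on the MR polynomial-minimization property for the (symmetrically) preconditioned singular SPSD system, obtain the first factor from the fact that the nonzero eigenvalues of the preconditioned matrix are $1-\mu^{\ell}$ with $|\mu|\leq\nu(\mathbf{H})<1$, and obtain the second factor by the standard Chebyshev argument with $\kappa^{(\ell)}$. The only difference is one of explicitness — the paper delegates the two scalar bounds to citations of Bai (Theorems 1, 2, 5) and Saad, whereas you derive them directly via the trial polynomial $(1-\lambda)^{k}$ and the shifted Chebyshev polynomial, and you make explicit the range-space orthogonality $\check{\mathbf{r}}_0\perp\mathcal{N}(\hat{\mathbf{A}})$ that the paper leaves implicit in restricting the maximum to the nonzero spectrum.
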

\begin{proof}
Theorem~\ref{th:iMRconv} ensures that MINRES preconditioned by the $\ell$ steps of the inner iterations determines a solution of $\mathbf{A} \mathbf{x} = \mathbf{b}$ for all $\mathbf{b} \in \mathcal{R}(\mathbf{A})$ and for all $\mathbf{x}_0 \in \mathbb{R}^n$.
From \cite[Theorem~1]{Bai2000}, we have 
\begin{align*}
	\| \check{\mathbf{r}}_k \|_2 & = \min_{\genfrac{}{}{0pt}{2}{p \in \mathbb{P}_k}{p(0) = 1}} \left\| p(\mathbf{A} \mathbf{C}^{(\ell)}) \check{\mathbf{r}}_0 \right\|_2 \leq \left( \min_{\genfrac{}{}{0pt}{2}{p \in \mathbb{P}_k}{p(0) = 1}} \max_{\lambda \in \sigma(\mathbf{A} \mathbf{C}^{(\ell)})} | p(\lambda) | \right) \| \check{\mathbf{r}}_0 \|_2.
\end{align*}
Since the eigenvalues not equal to zero of $\mathbf{A} \mathbf{C}^{(\ell)}$ are in the circle with radius $\rho(\mathbf{H})^\ell < 1$ with center at $1$, \cite[Theorems 2, 5]{Bai2000} gives the bound $\nu(\mathbf{H})^{k \ell}$ of the first factor.
Similarly to the error bound of CG using the condition number of the coefficient matrix, the residual bound of MINRES is obtained (cf.\ \cite[Secion 6.11.3]{Saad2003}).
\end{proof}

From Theorem~\ref{th:iMRb}, the convergence of MINRES preconditioned by inner iterations is expected to be fast as the spectral radius is small and/or the number of inner iterations are large.

We compare the convergence of MINRES preconditioned by inner iterations with the stationary iterative method alone that is used as inner iterations for MINRES.
If their (inner) iteration matrices are the same and semiconvergent, then the convergence of MINRES preconditioned by inner iterations is not worse in terms of the number of outer iterations vs.\ the residual norm.
This is because the convergence factor of the stationary iterative method is $\nu(\mathbf{H})^k$, which is larger than the factors in \eqref{eq:iMRb}.
However, their computational costs of each iteration are not the same, the total costs required to attain a certain stopping criterion are easily comparable in theory.

Since it is assumed in Theorem~\ref{th:iMRb} that $\mathbf{H}$ is semiconvergent, which is weaker than that $\mathbf{M} + \mathbf{N}$ is SPD, Theorem~\ref{th:iMRb} looses generality concerning the indefiniteness.
We showed Theorem~\ref{th:iMRb} for the application of MINRES to the normal equations, whose coefficient matrices are SPSD.

\subsection{Specific inner-iteration preconditioning methods.} \label{sec:spec}
Theorem~\ref{th:definit} gives insights for justifying the use of specific stationary iterative methods as inner-iteration preconditioning for CG and MINRES for solving SPSD and indefinite systems, respectively.
Let $\omega \in \mathbb{R}$ hereafter.
The splitting matrix $\omega^{-1} \mathbf{I}$ of $\mathbf{A}$ gives the Richardson method for \eqref{eq:LS} if $\omega \not = 0$.
For odd $\ell$, the inner-iteration preconditioning matrix $\mathbf{C}^{(\ell)}$ of the Richardson method is definite if $\omega \not = 0$.
Let $\mathbf{A} = \mathbf{L} + \mathbf{D} + \mathbf{L}^\mathsf{T}$, where $\mathbf{L}$ is strictly lower triangular and $\mathbf{D}$ is diagonal.
Then, the splitting matrix $\omega^{-1} \mathbf{D}$ of $\mathbf{A}$ with $\mathbf{D}$ nonsingular gives JOR for \eqref{eq:LS} if $\omega \not = 0$.
For odd $\ell$, the inner-iteration preconditioning matrix $\mathbf{C}^{(\ell)}$ of JOR is definite if $\omega \not = 0$ and $\mathbf{D}$ is definite.
For $\ell$ even, we show the following.
Note that the splitting $\mathbf{A} = \mathbf{M} - \mathbf{N}$ gives $\mathbf{M} + \mathbf{N} = 2 \mathbf{M} - \mathbf{A}$.

\begin{lemma} \label{lm:elleven}
Let $\mathbf{A}$ be a symmetric matrix, $\mathbf{B}$ be an SPD matrix, and $\mathbf{M} = \omega^{-1} \mathbf{B}$.
Denote the largest eigenvalue of $\mathbf{B}^{-\frac{1}{2}} \mathbf{A} \mathbf{B}^{-\frac{1}{2}}$ by $\lambda_\mathrm{max}$.
Then, $2 \mathbf{M} - \mathbf{A}$ is SPD if and only if $\omega \in (0, 2 / \lambda_\mathrm{max})$ for $\lambda_\mathrm{max} > 0$, $\omega \not \in [2 / \lambda_\mathrm{max}, 0]$ for $\lambda_\mathrm{max} < 0$, or $\omega > 0$ for $\lambda_\mathrm{max} = 0$.
\end{lemma}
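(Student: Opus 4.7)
The plan is to reduce the assertion that $2\mathbf{M}-\mathbf{A}$ is SPD to a single scalar spectral inequality via a congruence transformation, and then dispatch the three sign cases for $\lambda_\mathrm{max}$ by elementary algebra on that inequality.

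First, since $\mathbf{B}$ is SPD, the square root $\mathbf{B}^{1/2}$ (and its inverse) exist, and congruence preserves inertia. I would therefore write
\begin{equation*}
\mathbf{B}^{-\frac12}(2\mathbf{M}-\mathbf{A})\mathbf{B}^{-\frac12}=\mathbf{B}^{-\frac12}\bigl(2\omega^{-1}\mathbf{B}-\mathbf{A}\bigr)\mathbf{B}^{-\frac12}=2\omega^{-1}\mathbf{I}-\mathbf{B}^{-\frac12}\mathbf{A}\mathbf{B}^{-\frac12},
\end{equation*}
so that $2\mathbf{M}-\mathbf{A}$ is SPD iff this shifted matrix is SPD. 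Since $\mathbf{B}^{-1/2}\mathbf{A}\mathbf{B}^{-1/2}$ is symmetric, its eigenvalues $\lambda_1,\dots,\lambda_n$ are real, and the eigenvalues of the shifted matrix are exactly $2\omega^{-1}-\lambda_i$. Hence the statement collapses to the scalar requirement $2\omega^{-1}>\lambda_i$ for every $i$, equivalently $2\omega^{-1}>\lambda_\mathrm{max}$.

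The remaining work is a case split on $\lambda_\mathrm{max}$. If $\lambda_\mathrm{max}>0$, the inequality forces $\omega^{-1}>\lambda_\mathrm{max}/2>0$, hence $\omega>0$ and $\omega<2/\lambda_\mathrm{max}$, giving $\omega\in(0,2/\lambda_\mathrm{max})$. If $\lambda_\mathrm{max}=0$, the inequality reduces to $\omega^{-1}>0$, i.e., $\omega>0$. If $\lambda_\mathrm{max}<0$, any $\omega>0$ trivially satisfies $2\omega^{-1}>0>\lambda_\mathrm{max}$; for $\omega<0$ one multiplies $2\omega^{-1}>\lambda_\mathrm{max}$ first by $\omega<0$ and then by $1/\lambda_\mathrm{max}<0$, each flip giving $\omega<2/\lambda_\mathrm{max}$. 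Together this yields the complement $\omega\notin[2/\lambda_\mathrm{max},0]$, exactly as claimed.

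The only subtle point is the sign-bookkeeping in the $\lambda_\mathrm{max}<0$ branch, where an inequality is rearranged through two successive multiplications by negative quantities; once that is handled carefully, the three cases match the statement verbatim, and no further ingredients beyond congruence invariance and the spectral theorem for symmetric matrices are needed.
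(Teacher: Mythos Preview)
Your proof is correct and follows essentially the same approach as the paper: reduce via the congruence $2\mathbf{M}-\mathbf{A}\equiv 2\omega^{-1}\mathbf{I}-\mathbf{B}^{-1/2}\mathbf{A}\mathbf{B}^{-1/2}$ to the scalar inequality $2\omega^{-1}>\lambda$ for every eigenvalue $\lambda$, then read off the intervals. The paper's proof simply states this reduction and leaves the case split on the sign of $\lambda_{\mathrm{max}}$ implicit, whereas you spell it out explicitly; the underlying argument is identical.
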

\begin{proof}
Denote an eigenvalue of $\mathbf{B}^{-\frac{1}{2}} \mathbf{A} \mathbf{B}^{-\frac{1}{2}}$ by $\lambda$.
Then, the corresponding eigenvalue of $2 \omega^{-1} \mathbf{I} - \mathbf{B}^{-\frac{1}{2}} \mathbf{A} \mathbf{B}^{-\frac{1}{2}} \equiv 2 \omega^{-1} \mathbf{B} - \mathbf{A} = 2 \mathbf{M} - \mathbf{A}$ is $2 \omega^{-1} - \lambda$.

Since $2 \omega^{-1} - \lambda > 0$ for all $\lambda \in \sigma (\mathbf{B}^{-\frac{1}{2}} \mathbf{A} \mathbf{B}^{-\frac{1}{2}})$ is equivalent to that $2\mathbf{M} - \mathbf{A}$ is SPD, we have the intervals of $\omega$ for the positive definiteness of $2 \mathbf{M} - \mathbf{A}$.
\end{proof}

With the splitting matrices $\mathbf{B} = \mathbf{I}$ and $\mathbf{D}$ SPD in Lemma~\ref{lm:elleven}, we obtain the interval of the relaxation parameter $\omega$ for the definiteness of the Richardson and JOR inner-iteration preconditioning matrices $\mathbf{C}^{(\ell)}$ for $\ell$ even.
We omit the details to avoid redundancy.
From Lemma~\ref{lm:elleven}, with an SPD splitting matrix not necessarily diagonal, we can generalized JOR.

Next, consider the inner-iteration preconditioning using SSOR for \eqref{eq:LS}.
Let $\omega^{-1} \mathbf{D} + \mathbf{L}$ be the splitting matrix of $\mathbf{A}$ for the forward sweep of SSOR and $\omega^{-1} \mathbf{D} + \mathbf{L}^\mathsf{T}$ be that of $\mathbf{A}$ for the backward sweep.
\begin{theorem}
Assume that $\mathbf{D}$ is SPD.
Then, the SSOR splitting matrix $\mathbf{M} = \omega^{-1} (2 - \omega)^{-1} (\mathbf{D} + \omega \mathbf{L}) \mathbf{D}^{-1} (\mathbf{D} + \omega \mathbf{L}^\mathsf{T})$ of $\mathbf{A}$ is nonsingular if and only if $\omega \not = 0$, $2$.
For $\ell$ odd, the SSOR inner-iteration preconditioning matrix is SPD if and only if $\omega \in (0, 2)$.
Let $\mu = \lambda_\mathrm{min} (2 \mathbf{D}^{-\frac{1}{2}} \mathbf{L} \mathbf{D}^{-1} \mathbf{L}^\mathsf{T} \mathbf{D}^{-\frac{1}{2}} + \mathbf{D}^{-\frac{1}{2}} (\mathbf{L}+\mathbf{L}^\mathsf{T}) \mathbf{D}^{-\frac{1}{2}}) + 1$.
Then, for $\ell$ even, if $\omega$ satisfies
\begin{align}
			\omega& < \frac{1 + \sqrt{1-2\mu}}{\mu}, ~ 0 < \omega < \frac{1-\sqrt{1-2\mu}}{\mu},~ 2 < \omega, &&& \quad \mu &< 0, \label{eq:intval1} \\
			0 < \omega& < \frac{1-\sqrt{1-2\mu}}{\mu}, ~ 2 < \omega < \frac{1+\sqrt{1-2\mu}}{\mu},&&& \quad 0 < \mu &< \frac{1}{2}, \label{eq:intval2}\\
			& 0 < \omega < 2, &&& \quad \frac{1}{2} < \mu,& \label{eq:intval3}
\end{align}
then the SSOR inner-iteration preconditioning matrix is SPD 
\end{theorem}
\begin{proof}
Since $\omega^{-1} (2 - \omega)^{-1} (\mathbf{D} + \omega \mathbf{L}) \mathbf{D}^{-1} (\mathbf{D} + \omega \mathbf{L}^\mathsf{T}) \equiv \omega^{-1} (2 - \omega)^{-1} \mathbf{I}$, the SSOR splitting matrix of $\mathbf{A}$ is nonsingular if $\omega \not = 0, 2$.
For $\ell$ odd, from Theorem~\ref{th:definit}, the SSOR inner-iteration preconditioning matrix is SPD if and only if $\omega \in (0, 2)$.
Next, let $\ell$ even.
Assume $\omega \not = 0$, $2$.
Noting 
\begin{align*}
	2 \mathbf{M} - \mathbf{A} & = 2 \omega^{-1} (2 - \omega)^{-1} (\mathbf{D} + \omega \mathbf{L}) \mathbf{D}^{-1} (\mathbf{D} + \omega \mathbf{L}^\mathsf{T}) - \mathbf{A} \\
	& \equiv \omega^{-1} (2-\omega)^{-1} \lbrace \omega^2 [2 \mathbf{D}^{-\frac{1}{2}} \mathbf{L} \mathbf{D}^{-1} \mathbf{L}^\mathsf{T} \mathbf{D}^{-\frac{1}{2}} +  \mathbf{D}^{-\frac{1}{2}} (\mathbf{L} + \mathbf{L}^\mathsf{T}) \mathbf{D}^{-\frac{1}{2}} + \mathbf{I}] -2 (\omega - 1) \mathbf{I} \rbrace,
\end{align*}
let 
\begin{align*}
	\mathbf{G}(\omega) & = \omega (2 - \omega) \mathbf{D}^{-\frac{1}{2}} (2 \mathbf{M} - \mathbf{A}) \mathbf{D}^{-\frac{1}{2}} \\
	& = \omega^2 [2 \mathbf{D}^{-\frac{1}{2}} \mathbf{L} \mathbf{D}^{-1} \mathbf{L}^\mathsf{T} \mathbf{D}^{-\frac{1}{2}} +  \mathbf{D}^{-\frac{1}{2}} (\mathbf{L} + \mathbf{L}^\mathsf{T}) \mathbf{D}^{-\frac{1}{2}} + \mathbf{I}] -2 (\omega - 1) \mathbf{I} 
\end{align*}

Since $\lambda \geq \mu \omega^2 - 2 \omega + 2$ holds for all $\lambda \in \sigma(\mathbf{G}(\omega))$, for $\omega$ satisfying \eqref{eq:intval1}--\eqref{eq:intval3}, the SSOR inner-iteration precondiotining matrix is SPD.
\end{proof}

Note that the SSOR iteration matrix of $\mathbf{A}$ SPSD with $\mathbf{D}$ SPD and $\omega \in (0, 2)$ is semiconvergent \cite[Theorem~14]{Dax1990}.

\section{Application to least squares problems.} \label{sec:aplLS}
Consider solving linear least squares problems \eqref{eq:LSprob}.
We give the convergence theory of CGLS, LSQR, and LSMR preconditioned by inner iterations \cite{MorikuniHayami2013, MorikuniHayami2015} with setting $\mathbf{A} = A^\mathsf{T} \! A$ and $\mathbf{b} = A^\mathsf{T} \boldsymbol{b}$ in this section, by applying results in Section~\ref{sec:symmLS}. 

We form the preconditioned matrix for CGLS and LSMR with $\ell$ inner iterations, similarly to Section~\ref{sec:iCG}.
In each iteration of CGLS and LSMR, we apply $\ell$ steps of a stationary iterative method to $A^\mathsf{T} \! A \boldsymbol{z} = \boldsymbol{s}_k$ of the normal equations of the second kind (lines 2 and 6 in \cite[Algorithm~E.1]{MorikuniHayami2015} and lines 2 and 6 in \cite[Algorithm E.2]{MorikuniHayami2015}).
Here, $\boldsymbol{s}_k$ is the vector depending on the number of iterations $k$.

Let $M$ be a nonsingular matrix such that $A^\mathsf{T} \! A = M - N$.
Denote the iteration matrix by $H = M^{-1} N$.
Assume that the initial iterate is $\boldsymbol{z}^{(0)} = \boldsymbol{0}$.
Then, the $\ell$th iterate of the stationary iterative method is $\boldsymbol{z}^{(\ell)} = H \boldsymbol{z}^{(\ell - 1)} + M^{-1} \boldsymbol{s}_k = \sum_{i = 0}^{\ell - 1} H^i M^{-1} \boldsymbol{s}_k$, $\ell > 0$.
Hence, the preconditioning matrix is $C^{(\ell)} = \sum_{i = 0}^{\ell - 1} H^i M^{-1}$.
Therefore, the preconditioned matrix is $C^{(\ell)} A^\mathsf{T} \! A = \sum_{i = 0}^{\ell - 1} H^i (\I - H) = \I - H^\ell$.
See \cite{Santos2003} for a different formulation of CGLS preconditioned by the SSOR splitting.

Now we give conditions such that CGLS, LSQR, and LSMR preconditioned by inner iterations determine a least squares solution.

\begin{theorem} \label{th:iCGLSLSMR}
Let $A \in \mathbb{R}^{m \times n}$.
Assume that $M = M^\mathsf{T}$ is a nonsingular matrix such that $A^\mathsf{T} \! A = M - N$.
Then, CGLS, LSQR, and LSMR preconditioned by $\ell$ steps of the inner iterations defined above with $M$ definite for $\ell$ odd and $M + N$ definite for $\ell$ even, respectively, determine a solution of $\min_{\boldsymbol{x} \in \mathbb{R}^n} \| \boldsymbol{b} - A \boldsymbol{x} \|_2$ for all $\boldsymbol{b} \in \mathbb{R}^m$ and for all $\boldsymbol{x}_0 \in \mathbb{R}^n$.
\end{theorem}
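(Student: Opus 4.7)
The plan is to reduce the theorem to the symmetric-system results already proved, namely Theorems \ref{th:iCGconv} and \ref{th:iMRconv}, by working through the normal equations. Since CGLS and LSQR are mathematically equivalent to CG applied to the normal equations \eqref{eq:normeq}, and LSMR is mathematically equivalent to MR applied to \eqref{eq:normeq} (as recalled in the introduction and in Section \ref{sec:NE_LS}), I would set $\mathbf{A} = A^\mathsf{T} \! A$ and $\mathbf{b} = A^\mathsf{T} \boldsymbol{b}$ in the framework of Section \ref{sec:symmLS}. The inner-iteration preconditioning matrix $C^{(\ell)} = \sum_{i=0}^{\ell-1} H^i M^{-1}$ constructed at the start of Section \ref{sec:NE_LS} is then precisely the matrix $\mathbf{C}^{(\ell)}$ considered in Section \ref{sec:definit}, with $\mathbf{M} = M$ and $\mathbf{N} = N$.

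First I would verify the hypotheses of Theorems \ref{th:iCGconv} and \ref{th:iMRconv}. The coefficient matrix $\mathbf{A} = A^\mathsf{T} \! A$ is automatically SPSD, and by assumption $M = M^\mathsf{T}$ is nonsingular with $M$ definite for $\ell$ odd and $M + N$ definite for $\ell$ even. By Theorem \ref{th:definit}, $C^{(\ell)}$ is therefore SD in both parity cases, so the definiteness requirements of Theorems \ref{th:iCGconv} and \ref{th:iMRconv} are met.

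The crucial observation, which handles the rank-deficient case and the fact that $\boldsymbol{b}$ need not lie in $\mathcal{R}(A)$, is the standard range identity $\mathcal{R}(A^\mathsf{T} \! A) = \mathcal{R}(A^\mathsf{T})$. This immediately gives $A^\mathsf{T} \boldsymbol{b} \in \mathcal{R}(A^\mathsf{T} \! A)$ for every $\boldsymbol{b} \in \mathbb{R}^m$, so the consistency hypothesis $\mathbf{b} \in \mathcal{R}(\mathbf{A})$ of the invoked theorems is satisfied without any restriction on $\boldsymbol{b}$. Applying Theorem \ref{th:iCGconv} to the inner-iteration preconditioned CG for $A^\mathsf{T} \! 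A \boldsymbol{x} = A^\mathsf{T} \boldsymbol{b}$ yields, for all $\boldsymbol{b} \in \mathbb{R}^m$ and all $\boldsymbol{x}_0 \in \mathbb{R}^n$, an iterate $\boldsymbol{x}_k$ that solves the normal equations \eqref{eq:normeq}; the corresponding CGLS and LSQR iterates, being mathematically equivalent, do the same. The analogous application of Theorem \ref{th:iMRconv} handles LSMR.

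Since \eqref{eq:normeq} is the normal equation system of \eqref{eq:LSprob}, any solution of \eqref{eq:normeq} is a least squares solution of $\min_{\boldsymbol{x} \in \mathbb{R}^n} \| \boldsymbol{b} - A \boldsymbol{x} \|_2$, and the conclusion follows. The main obstacle I anticipate is simply being explicit about the equivalence between the preconditioned least squares implementations (CGLS, LSQR, LSMR with inner iterations) and their symmetric counterparts (PCG and PMR on the normal equations with preconditioner $P^{-1} = C^{(\ell)}$), so that the solution produced on one side is genuinely the solution produced on the other; this is essentially a bookkeeping check, relying on the formulation of Section \ref{sec:NE_LS} and the referenced algorithms in \cite{MorikuniHayami2015}.
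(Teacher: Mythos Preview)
Your proposal is correct and follows essentially the same route as the paper: identify $\mathbf{A}=A^{\mathsf T}A$, $\mathbf{b}=A^{\mathsf T}\boldsymbol{b}$, use Theorem~\ref{th:definit} to obtain the definiteness of $C^{(\ell)}$, and then invoke Theorems~\ref{th:iCGconv} and~\ref{th:iMRconv}. Your write-up is in fact more complete than the paper's terse proof, since you make explicit the range identity $\mathcal{R}(A^{\mathsf T}A)=\mathcal{R}(A^{\mathsf T})$ that justifies the consistency hypothesis for arbitrary $\boldsymbol{b}\in\mathbb{R}^m$ and the passage from normal-equation solutions to least-squares solutions.
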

\begin{proof}
Since $A^\mathsf{T} \! A$ is SPSD, the following hold from Theorem~\ref{th:definit}.
For $\ell$ odd, $C^{(\ell)}$ is SPD if and only if $M$ is SPD.
For $\ell$ even, $C^{(\ell)}$ is SPD if and only if $M + N$ is SPD.
Hence, Theorems~\ref{th:iCGconv} and \ref{th:iMRconv} complete the proof.
\end{proof}

\begin{remark} \label{rem:1}
This theorem holds whether $A$ is of full-rank or rank-deficient, and whether $A$ is overdetermined or underdetermined, i.e., unconditionally with respect to $A$.
\end{remark}

\begin{remark} \label{rem:2}
We can derive bounds of these methods under the conditions of Theorem~\ref{th:iCGLSLSMR} from Section~\ref{sec:bound}.
\end{remark}

There are efficient implementations of inner-iteration preconditioning without explicitly forming $A A^\mathsf{T}$ such as the Richardson-NE, Cimmino-NE, and NE-SSOR methods \cite[Appendix D]{MorikuniHayami2015}, which are mathematically equivalent to the Richardson method, JOR, and SSOR applied to the normal equations of the second kind, respectively.
CGLS preconditioned by one step of NE-SSOR was considered in \cite{BjorckElfving1979}.
Let $A^\mathsf{T} \! A = L + D + L^\mathsf{T}$, where $L$ is strictly lower triangular and $D$ is diagonal.
Assume that $A$ has no zero columns.
Then, $D$ is SPD.
If $\mathbf{A} = A^\mathsf{T} \! A$ and $\mathbf{b} = A^\mathsf{T} \boldsymbol{b}$ in Section~\ref{sec:spec}, then we obtain the intervals of the parameter values of Richardson-NE, Cimmino-NE, and NE-SSOR such that their inner-iteration preconditioning matrices are SD.
Thus, from Theorem~\ref{th:iCGLSLSMR}, CGLS, LSQR, and LSMR preconditioned by these inner iterations with relaxation parameters within the intervals determines a solution of $\min_{\boldsymbol{x} \in \mathbb{R}^n} \| \boldsymbol{b} - A \boldsymbol{x} \|_2$ for all $\boldsymbol{b} \in \mathbb{R}^m$ and for all $\boldsymbol{x}_0 \in \mathbb{R}^n$.

\section{Application to minimum-norm solution problems.}\label{sec:aplminsol}
Consider solving minimum-norm solution problems \eqref{eq:minsolLS}.
Results in Section~\ref{sec:symmLS} can be applied to CGNE and MRNE preconditioned by inner iterations with $\mathbf{A} = A A^\mathsf{T}$ and $\mathbf{b} = \boldsymbol{b}$ \cite{MorikuniHayami2013, MorikuniHayami2015}.

In each iteration of CGNE and MRNE, we apply $\ell$ steps of a stationary iterative method to $A A^\mathsf{T} \boldsymbol{z} = \boldsymbol{r}_k$ of the normal equations of the second kind (lines 2 and 6 in \cite[Algorithm~E.3]{MorikuniHayami2015} and lines 2 and 6 in \cite[Algorithm E.4]{MorikuniHayami2015}).
Here, $\boldsymbol{r}_k$ is the vector depending on the number of iterations $k$.
Since $A A^\mathsf{T}$ is SPSD, replacing $A^\mathsf{T} A$ by $A A^\mathsf{T}$ in the discussion of Section~\ref{sec:aplLS}, the following hold from Theorem~\ref{th:definit}.
For $\ell$ odd, $C^{(\ell)}$ is SPD if and only if $M$ is SPD.
For $\ell$ even, $C^{(\ell)}$ is SPD if and only if $M + N$ is SPD.
Thus, we obtain the following similarly to Theorem~\ref{th:iCGLSLSMR}.



\begin{theorem} \label{th:iCGNEMRNE}
Let $A \in \mathbb{R}^{m \times n}$ and $M = M^\mathsf{T}$ be nonsingular such that $A A^\mathsf{T} = M - N$.
Then, the CGNE and MRNE methods preconditioned by $\ell$ steps of the inner iterations defined above with $M$ definite for $\ell$ odd and $M + N$ definite for $\ell$ even, respectively, determine the minimum-norm solution of $A \boldsymbol{x} = \boldsymbol{b}$ for all $\boldsymbol{b} \in \mathcal{R}(A)$ and for all $\boldsymbol{x}_0 \in \mathcal{R}(A)$.
\end{theorem}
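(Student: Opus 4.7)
The plan is to reduce Theorem \ref{th:iCGNEMRNE} directly to Theorems \ref{th:iCGconv} and \ref{th:iMRconv} by exploiting the fact that CGNE and MRNE are, by construction (as recalled in the introduction around \eqref{eq:normaleq2} and in the paragraph preceding the theorem), mathematically equivalent to CG and MR, respectively, applied to the constraint system $A A^\mathsf{T} \boldsymbol{u} = \boldsymbol{b}$ under the change of variable $\boldsymbol{x} = A^\mathsf{T} \boldsymbol{u}$. Since $\mathbf{A} := A A^\mathsf{T}$ is SPSD, the preconditioning matrix $C^{(\ell)}$ produced by the inner iterations is the one analysed in Theorem \ref{th:definit}, and under the hypotheses of the present theorem ($M$ definite for $\ell$ odd, $M+N$ definite for $\ell$ even) that theorem yields at once that $C^{(\ell)}$ is symmetric and definite.

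Next I would match the range conditions. The compatibility condition required by Theorems \ref{th:iCGconv} and \ref{th:iMRconv} for the symmetric system $A A^\mathsf{T} \boldsymbol{u} = \boldsymbol{b}$ is $\boldsymbol{b} \in \mathcal{R}(A A^\mathsf{T})$. Using $\mathcal{R}(A A^\mathsf{T}) = \mathcal{R}(A)$ (which follows from $\mathcal{N}(A^\mathsf{T}) = \mathcal{N}(A A^\mathsf{T})$ since $\boldsymbol{v}^\mathsf{T} A A^\mathsf{T} \boldsymbol{v} = \|A^\mathsf{T} \boldsymbol{v}\|_2^2$, hence by orthogonal complements $\mathcal{R}(A A^\mathsf{T}) = \mathcal{R}(A)$), the assumption $\boldsymbol{b} \in \mathcal{R}(A)$ exactly provides this compatibility. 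Applying Theorem \ref{th:iCGconv} (respectively, Theorem \ref{th:iMRconv}) then shows that CG (respectively, MR) preconditioned by $\ell$ steps of the inner iterations determines a solution $\boldsymbol{u}_*$ of $A A^\mathsf{T} \boldsymbol{u} = \boldsymbol{b}$ for every initial iterate $\boldsymbol{u}_0 \in \mathbb{R}^m$.

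Finally, I would translate back to the least-squares variable: setting $\boldsymbol{x}_* = A^\mathsf{T} \boldsymbol{u}_*$ gives $A \boldsymbol{x}_* = A A^\mathsf{T} \boldsymbol{u}_* = \boldsymbol{b}$, so $\boldsymbol{x}_*$ solves $A \boldsymbol{x} = \boldsymbol{b}$. The initial iterate $\boldsymbol{x}_0 = A^\mathsf{T} \boldsymbol{u}_0$ automatically lies in $\mathcal{R}(A^\mathsf{T})$, and conversely any $\boldsymbol{x}_0 \in \mathcal{R}(A^\mathsf{T})$ can be written in this form; this is the role of the assumption on $\boldsymbol{x}_0$ in the statement. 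The equivalence of CGNE (resp.\ MRNE) with CG (resp.\ MR) applied to $A A^\mathsf{T} \boldsymbol{u} = \boldsymbol{b}$ under $\boldsymbol{x} = A^\mathsf{T} \boldsymbol{u}$ is precisely the one used in \cite{MorikuniHayami2013}, \cite{MorikuniHayami2015} and already invoked in the parallel argument for CGLS, LSQR, and LSMR in Theorem \ref{th:iCGLSLSMR}, so no new derivation is needed here.

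The main obstacle is really just bookkeeping rather than substance: one must be careful that the inner-iteration preconditioning performed inside CGNE and MRNE (as specified in the referenced Algorithms E.3 and E.4) is indeed the same preconditioning that Theorems \ref{th:iCGconv} and \ref{th:iMRconv} are formulated for, and that the implicit variable transformation $\boldsymbol{x} = A^\mathsf{T} \boldsymbol{u}$ is respected by this preconditioning. Once the expression $C^{(\ell)} = \sum_{i=0}^{\ell-1} H^i M^{-1}$ derived earlier in the subsection is identified with the $\mathbf{C}^{(\ell)}$ of Section \ref{sec:iCG} (with $\mathbf{A} = A A^\mathsf{T}$), the two theorems apply verbatim and the proof collapses to one line citing them.
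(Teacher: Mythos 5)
Your proposal is correct and follows essentially the same route as the paper: the paper's (very terse) argument is exactly the observation that $A A^\mathsf{T}$ is SPSD, so Theorem~\ref{th:definit} gives definiteness of $C^{(\ell)}$ under the stated hypotheses, after which Theorems~\ref{th:iCGconv} and~\ref{th:iMRconv} apply to the constraint system $A A^\mathsf{T} \boldsymbol{u} = \boldsymbol{b}$ ``similarly to Theorem~\ref{th:iCGLSLSMR}''; you simply spell out the compatibility step $\mathcal{R}(A A^\mathsf{T}) = \mathcal{R}(A)$ and the back-substitution $\boldsymbol{x}_* = A^\mathsf{T} \boldsymbol{u}_*$ that the paper leaves implicit. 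Your remark that the initial iterate naturally lives in $\mathcal{R}(A^\mathsf{T})$ (not $\mathcal{R}(A)$, which does not even have the right dimension for $\boldsymbol{x}_0 \in \mathbb{R}^n$) is the correct reading of the statement and matches the paper's own later usage.
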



Similar arguments to Remarks~\ref{rem:1} and \ref{rem:2} hold under the conditions of Theorem~\ref{th:iCGNEMRNE}.

We focus on using specific stationary iterative methods for inner-iteration preconditioning.
Let $A A^\mathsf{T} = L + D + L^\mathsf{T}$, where $L$ is strictly lower triangular and $D$ is diagonal.
Assume that $A$ has no zero rows.
Then, $D$ is SPD.
If $\mathbf{A} = A A^\mathsf{T}$ and $\mathbf{b} = \boldsymbol{b}$ in Section~\ref{sec:spec}, then we obtain the intervals of the parameter values of Richardson-NE, Cimmino-NE, and NE-SSOR such that their inner-iteration preconditioning matrices are SPD.
Thus, from Theorem~\ref{th:iCGNEMRNE}, CGNE and MRNE preconditioned by these inner iterations with  relaxation parameters within the intervals determines the minimum-norm solution of $A \boldsymbol{x} = \boldsymbol{b}$ for all $\boldsymbol{b} \in \mathcal{R}(A)$ and for all $\boldsymbol{x}_0 \in \mathcal{R}(A^\mathsf{T})$.
CGNE preconditioned by one step of NE-SSOR was considered in \cite{BjorckElfving1979}.
We can generalize this to a multistep version of NE-SSOR.

\section{Conclusions.} \label{sec:conc}
We considered applying stationary iterative methods with a symmetric splitting matrix as inner-iteration preconditioning to Krylov subspace methods.
We gave conditions such that the inner-iteration preconditioning matrix is definite, and show that CG and MINRES preconditioned by the inner iterations determines a solution of symmetric linear systems including the singular case.
Applying these results to CGLS, LSQR, LSMR, CGNE, and MRNE preconditioned by inner iterations, and we guaranteed using these methods for solving least squares and minimum-norm solution problems whose coefficient matrices are not necessarily of full rank.

\section*{Acknowledgement}
The author would like to thank Professor Ken Hayami and Doctor Miroslav Rozlo\v{z}n\'{i}k for their valuable comments.
This work was supported in part by JSPS KAKENHI Grant Number~16K17639.

\bibliographystyle{siam}
\bibliography{ref}
\end{document}